\newtheorem{thm}{Theorem}[section]
\newtheorem{prop}[thm]{Proposition}
\newtheorem{lem}[thm]{Lemma}
\newtheorem{cor}[thm]{Corollary}
\newtheorem*{KCC}{Klyachko's Compatibility Condition}
\theoremstyle{definition}
\newtheorem{remark}[thm]{Remark}
\newtheorem{example}[thm]{Example}
\def\A{\ensuremath{\mathbf{A}}}
\def\P{\ensuremath{\mathbf{P}}}
\def\Q{\ensuremath{\mathbf{Q}}} 
\def\R{\ensuremath{\mathbf{R}}}
\def\Z{\ensuremath{\mathbf{Z}}}
\def\D{\mathcal{D}}
\def\F{\mathcal{F}}
\def\G{\mathcal{G}}
\def\L{\mathcal{L}}
\def\cR{\mathcal{R}}
\def\O{\ensuremath{\mathcal{O}}}
\def\k{\mathbf{k}}
\DeclareMathOperator{\Bl}{Bl}
\DeclareMathOperator{\Cl}{Cl}
\DeclareMathOperator{\Hom}{Hom}
\DeclareMathOperator{\Proj}{Proj}
\DeclareMathOperator{\Sym}{Sym}
\DeclareMathOperator{\Grass}{Grass}
\def\<{\ensuremath{\langle}}
\def\>{\ensuremath{\rangle}}
\begin{document}
%\setpagewiselinenumbers
%\linenumbers

\title[Cox rings of toric bundles]{Cox rings and pseudoeffective cones of projectivized toric vector bundles}

\author{Jos\'e Gonz\'alez}
\author{Milena Hering}    
\author{Sam Payne}
\author{Hendrik S\"u{\ss}}

\begin{abstract}
We study projectivizations of a special class of toric vector bundles that includes cotangent bundles, whose associated Klyachko filtrations are particularly simple.  For these projectivized bundles, we give generators for the cone of effective divisors and a presentation of the Cox ring as a polynomial algebra over the Cox ring of a blowup of a projective space along a sequence of linear subspaces.  As applications, we show that the projectivized cotangent bundles of some toric varieties are not Mori dream spaces and give examples of projectivized toric vector bundles whose Cox rings are isomorphic to that of $\overline{M}_{0,n}$.
\end{abstract}

\maketitle

\section{Introduction}

Projectivizations of toric vector bundles over complete toric varieties are a large class of rational varieties that have interesting moduli and share some of the pleasant properties of toric varieties and other Mori dream spaces.  Hering, Musta\c{t}\u{a}, and Payne showed that their cones of effective curves are polyhedral and asked whether their Cox rings are indeed finitely generated \cite{tvbpositivity}.  For rank two bundles an affirmative answer is given in \cite{HausenSuess10, Gonzalez10} or can be derived from the results of  \cite{Knop93}.

%There are multiple proofs of an affirmative answer for rank two bundles \cite{Knop93, HausenSuess10, Gonzalez10}.

Here we apply general results of Hausen and S\"u{\ss} on Cox rings for varieties with torus actions to give a presentation of the Cox ring for certain projectivized toric vector bundles as a polynomial algebra over the Cox ring of the blowup of projective space along a collection of linear subspaces.  The question of finite generation for the Cox rings of these blowups is completely understood when the collection of linear subspaces consists of finitely many points in very general position, through work of Mukai \cite{Mukai04}, and Castravet and Tevelev \cite{CastravetTevelev06} in connection with Hilbert's fourteenth problem.

Let $\k$ be an algebraically closed field, and let $X$ be a smooth projective toric variety of dimension $d$ over $\k$, corresponding to a fan $\Sigma$ with $n$ rays.  Throughout, we use $r$ to denote the rank of a vector bundle on $X(\Sigma)$. By a \emph{toric vector bundle} on $X$ we mean a vector bundle admitting an action of the dense torus $T$ in $X$ that is linear on fibers and compatible with the action on the base. By the \emph{projectivization} of a toric vector bundle we mean the bundle of rank one quotients.

\begin{thm} \label{thm:CoxRing}
Suppose $\k$ is uncountable, $n > r \geq d$, and $\frac{1}{r} + \frac{1}{n-r} \leq \frac{1}{2}$.  Then there is a non-split toric vector bundle $\F$ of rank $r$ on $X(\Sigma)$ such that the Cox ring of the projectivization $\P(\F)$ is not finitely generated.
\end{thm}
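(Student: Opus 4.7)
My plan is to reduce Theorem~\ref{thm:CoxRing} to the Mukai--Castravet--Tevelev non-finite-generation theorem for Cox rings of blowups of projective space at very general points. The arithmetic hypothesis $\frac{1}{r} + \frac{1}{n-r} \leq \frac{1}{2}$ is precisely Mukai's inequality, which guarantees that $\mathrm{Cox}\bigl(\Bl_{p_1,\ldots,p_n}\P^{r-1}\bigr)$ is not finitely generated whenever the $p_i$ lie in very general position.

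First I would construct $\F$ as the toric vector bundle whose Klyachko filtration along each of the $n$ rays $\rho_i$ of $\Sigma$ is a single-step filtration of $\k^r$, namely the flag determined by a point $p_i \in \P^{r-1}$ (equivalently, a codimension-one subspace of $\k^r$). The hypothesis $n > r \geq d$ enters twice: $r \geq d$ leaves enough dimensional room in each maximal cone for Klyachko's compatibility condition to be satisfied by filtrations in sufficiently general position (at most $d \leq r$ hyperplanes need be jointly compatible), and $n > r$ supplies enough rays to apply Mukai's bound. Because $\k$ is uncountable, I may choose the $p_i$ in very general position, avoiding the countable collection of bad loci governing both compatibility and Mukai's genericity assumptions. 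The resulting bundle is non-split, since any decomposition of $\F$ would force a linear dependence among the $p_i$ incompatible with generic choice.

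Next I would invoke the structural theorem of the preceding section, which applies precisely to filtrations of this simple form, to identify
\[
\mathrm{Cox}\bigl(\P(\F)\bigr) \;\cong\; \mathrm{Cox}\bigl(\Bl_{p_1,\ldots,p_n}\P^{r-1}\bigr)\bigl[y_1,\ldots,y_m\bigr]
\]
as a polynomial algebra over the Cox ring of the blowup of $\P^{r-1}$ at the $n$ very general points $p_1,\ldots,p_n$. Since a polynomial algebra over a ring $R$ is finitely generated as a $\k$-algebra if and only if $R$ itself is, the Mukai--Castravet--Tevelev result forces $\mathrm{Cox}\bigl(\P(\F)\bigr)$ to be non-finitely-generated.

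I expect the main obstacle to be the construction step: I must check that generic codimension-one filtrations do satisfy Klyachko's compatibility condition on every cone of $\Sigma$, and that the data fit exactly into the hypotheses of the structural theorem in the form that yields a blowup at \emph{points} rather than at positive-dimensional linear subspaces. Non-splittedness and the final application of Mukai are then essentially immediate once the bookkeeping from the structural theorem is in place.
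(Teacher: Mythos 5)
Your proposal is correct and follows essentially the same route as the paper: choose single-step hyperplane filtrations at very general points (using $r \geq d$ and Remark~\ref{rem:Compatibility} for Klyachko compatibility), apply Theorem~\ref{thm:polynomial} to present $\cR(\P(\F))$ as a polynomial algebra over $\cR(\Bl_S \P^{r-1})$, and conclude via Mukai's theorem, whose inequality $s \geq r+2+\frac{4}{r-2}$ is exactly the hypothesis $\frac{1}{r}+\frac{1}{n-r}\leq\frac{1}{2}$. Your additional remarks on non-splitness and on the compatibility check are consistent with the paper's argument.
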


\noindent In particular, on any smooth projective toric surface corresponding to a fan with at least nine rays, there is a rank three toric vector bundle whose projectivization is not a Mori dream space.  The bundles that we construct in the proof of Theorem~\ref{thm:CoxRing} are of a special form; in Klyachko's classification, they correspond to collections of filtrations in which each filtration contains at most one nontrivial subspace, which is of codimension one, and this arrangement of hyperplanes is in very general position.  The inequality in the theorem is sharp; if $\frac{1}{r} + \frac{1}{n-r} > \frac{1}{2}$ and the hyperplanes are in general position, then the projectivization of any such bundle is a Mori dream space. See Corollary~\ref{cor:MDS}.

\begin{remark}
The techniques used to prove Theorem~\ref{thm:CoxRing} give more information than just whether or not a Cox ring is finitely generated.  In Section~\ref{sec:CoxRings} we give presentations for the Cox rings of certain projectivized toric vector bundles as algebras over Cox rings of blowups of projective spaces along linear subspace arrangements.  As one special case, we produce an example of a vector bundle on a toric surface whose projectivization has the same Cox ring as $\overline{M}_{0,n}$.   See Example~\ref{ex:Kapranov}.
\end{remark}

\begin{remark}
If $\P(\F)$ is a projectivized bundle whose Cox ring is not finitely generated, it may still happen that the section ring of the tautological quotient line bundle $\O(1)$ on $\P(\F)$ is finitely generated.  However, Theorem~\ref{thm:CoxRing} implies that there also exist toric vector bundles $\F'$ such that the section ring of $\O(1)$ on $\P(\F')$ is not finitely generated.

Suppose $\P(\F)$ is a projectivized toric vector bundle on $X(\Sigma)$ whose Cox ring is not finitely generated, and let $\L_1, \ldots, \L_k$ be line bundles that positively generate the Picard group of $X(\Sigma)$.   Then the section ring of $\O(1)$ on the projectivization of $\F' = \F \oplus \L_1 \oplus \cdots \oplus \L_k$ is not finitely generated.  So Theorem~\ref{thm:CoxRing} gives negative answers to Questions 7.1 and 7.2 of \cite{tvbpositivity}.  
\end{remark}

A necessary, but not sufficient, condition for a projective variety to be a Mori dream space is that its pseudoeffective cone be polyhedral.  In many of the examples covered by Theorem~\ref{thm:CoxRing}, it is unclear whether this condition holds.  However, by choosing the toric variety carefully, with an even larger number of rays, we produce examples of projectivized toric vector bundles whose pseudoeffective cones are not polyhedral.

\begin{thm} \label{thm:EffCones}
Suppose $\k$ is uncountable, $n-d > r \geq d$, and $\frac{1}{r} + \frac{1}{n- d- r} \leq \frac{1}{2}$, and assume there is some cone $\sigma \in \Sigma$ such that every ray of $\Sigma$ is contained in either $\sigma$ or $-\sigma$.  Then there is a non-split toric vector bundle $\F$ of rank $r$ on $X(\Sigma)$ such that the pseudoeffective cone of $\P(\F)$ is not polyhedral.
\end{thm}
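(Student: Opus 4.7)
The plan is to adapt the construction used for Theorem~\ref{thm:CoxRing}, arranging the Klyachko filtration data so that $d$ of the $n$ rays of $\Sigma$ carry trivial filtrations while the remaining $n-d$ rays each carry a one-step filtration by a hyperplane in $\k^r$, and these $n-d$ hyperplanes are placed in very general position. The hypothesis on $\sigma$ is used to select a suitable set of $d$ rays, namely those generating $\sigma$, which can be assigned trivial filtrations consistently; the constraint that every ray of $\Sigma$ lies in $\sigma \cup (-\sigma)$ ensures that on each maximal cone the nontrivial filtration data stemming from rays in $-\sigma$ interacts with the trivial data from rays in $\sigma$ in the simplest possible way, so that Klyachko's compatibility condition reduces to a general-position statement, which is realized using the uncountability of $\k$.

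Applying the Cox ring presentation of Section~\ref{sec:CoxRings} then gives an isomorphism
\[
\mathrm{Cox}(\P(\F)) \;\cong\; \mathrm{Cox}(Y)[x_1,\ldots,x_m],
\]
where $Y$ is a blowup of $\P^{r-1}$ along a linear subspace arrangement derived from the $n-d$ hyperplanes. Because polynomial extension contributes only finitely many additional extremal rays to the pseudoeffective cone, $\Fff(\P(\F))$ is polyhedral if and only if $\Fff(Y)$ is polyhedral. It therefore suffices to prove non-polyhedrality of $\Fff(Y)$.

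The last step, which is also the main obstacle, is to exhibit infinitely many extremal pseudoeffective classes on $Y$ under the sharp inequality $\frac{1}{r} + \frac{1}{n-d-r} \leq \frac{1}{2}$. Projective duality relates $Y$ to a blowup of projective space at $n-d$ very general points, so this amounts to an instance of classical results on the effective cones of such blowups beyond the Mukai threshold. This is strictly more than what was needed for Theorem~\ref{thm:CoxRing}, where failure of finite generation of the Cox ring (provided by \cite{Mukai04, CastravetTevelev06}) already sufficed; non-polyhedrality of the effective cone, by contrast, requires the explicit production of an infinite family of extremal divisor classes, most naturally either as a pseudoautomorphism orbit of a single extremal class or as an infinite sequence of negative curves on a suitable surface contraction of $Y$.
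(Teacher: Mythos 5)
Your choice of bundle is the right one (trivial filtrations on the $d$ rays spanning $\sigma$, very general hyperplanes $F_{d+1},\ldots,F_n$ on the remaining rays), and your closing observation that one needs genuine non-polyhedrality of the effective cone of a very general blowup of $\P^{r-1}$ at $n-d$ points --- not merely failure of finite generation --- is correct; the paper simply cites \cite{Mukai04} for this. But the bridge you build between $\P(\F)$ and the blowup is where the argument breaks. You deduce from the Cox-ring presentation $\cR(\P(\F)) \cong \cR(Y)[x_1,\ldots,x_m]$ that ``polynomial extension contributes only finitely many additional extremal rays, hence $\Fff(\P(\F))$ is polyhedral if and only if $\Fff(Y)$ is.'' This inference fails twice over. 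First, the isomorphism of Theorem~\ref{thm:polynomial} is not a priori graded (the paper remarks on this explicitly), so it does not by itself transport any information about effective cones. Second, even after fixing the grading (with $\deg x_i = [\pi^{-1}(D_{\rho_i})]$), the statement one gets is that $\Fff(\P(\F))$ is the cone generated by the image of $\Fff(Y)$ together with the $m = d$ extra classes $[D_1],\ldots,[D_d]$ --- and adjoining finitely many generators to a non-polyhedral cone can perfectly well produce a polyhedral one (the new rays may swallow the curved part of the boundary). So ``polyhedral iff polyhedral'' is not automatic.

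This is exactly where the hypothesis that every ray lies in $\sigma \cup (-\sigma)$ enters the paper's proof, and you have assigned it the wrong job. It is not needed to make Klyachko's compatibility condition work --- that is handled by $r \geq d$ and general position of the hyperplanes, as in Remark~\ref{rem:Compatibility}, and holds on any smooth $X(\Sigma)$. Rather, it guarantees that for $i \leq d$ the divisor $D_{\rho_i}$ is linearly equivalent to an effective combination of the $D_{\rho_j}$ with $j > d$, so that the extra classes $[D_1],\ldots,[D_d]$ already lie in the image of $\Fff(\Bl_S\P_F)$ and the cone is not enlarged at all. The paper in fact bypasses the Cox ring entirely here (it states that the techniques of Section~\ref{sec:EffCones} are independent of Section~\ref{sec:CoxRings}): it computes the class of every $T$-invariant prime divisor $\D_H$ directly via Lemmas~\ref{lem:sections} and~\ref{lem:hypersurfaces}, obtains Proposition~\ref{prop:EffCones} describing $\Fff(\P(\F))$ as generated by $\varphi^*\bigl(\Fff(\Bl_S\P_F)\bigr)$ together with the $[D_i]$ for $F_i = 0$, and then uses the $\sigma\cup(-\sigma)$ hypothesis to absorb those extra generators. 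To repair your proof you need this divisor-class computation (or an equivalent graded statement) plus the absorption step; without it the conclusion does not follow. A minor further point: no projective duality is needed at the end --- since the $F_j$ are hyperplanes in $F$, the centers $\P_{F/F_j}$ are already points of $\P_F$, so $Y$ is literally the blowup of $\P^{r-1}$ at $n-d$ very general points.
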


\noindent One can construct examples of toric varieties satisfying the hypotheses of Theorem~\ref{thm:EffCones} through sequences of iterated blowups of $(\P^1)^d$, as in Example~\ref{ex:dim2}, below.

The constructions used to prove Theorems~\ref{thm:CoxRing} and \ref{thm:EffCones} involve choosing bundles that are very general in their moduli spaces.  However, by choosing the fan sufficiently carefully, one gets examples of smooth projective toric varieties in characteristic zero whose projectivized cotangent bundles are not Mori dream spaces.  For these examples, the bundle is determined by the combinatorial data in the fan.

\begin{thm} \label{thm:CotangentBundle}
Suppose $d \geq 3$ and the characteristic of $\k$ is not two or three.  Then there exists a smooth projective toric variety $X(\Sigma')$ of dimension $d$ over $\k$ such that the Cox ring of the projectivized cotangent bundle on $X(\Sigma')$ is not finitely generated.
\end{thm}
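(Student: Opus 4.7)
The cotangent bundle of a smooth toric variety $X(\Sigma)$ has a particularly simple Klyachko description: at each ray $\rho \in \Sigma(1)$ with primitive generator $v_\rho$, the only nontrivial step in the filtration is the codimension-one hyperplane $v_\rho^\perp \subset M_\k$ (the characters vanishing to positive order along $D_\rho$). Thus $\Omega^1_X$ is precisely the kind of bundle treated in Section~\ref{sec:CoxRings}, and the Cox ring of $\P(\Omega^1_X)$ is a polynomial algebra over the Cox ring of the blowup of $\P(M_\k^\vee) \cong \P^{d-1}$ at the configuration of points $\{[v_\rho]\}_{\rho \in \Sigma(1)}$. Hence finite generation of the Cox ring of $\P(\Omega^1_X)$ is equivalent to the Mori dream space property for this blowup of projective space.

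The problem now reduces to constructing, for each $d \geq 3$ and each $\k$ of characteristic not $2$ or $3$, a smooth projective toric $d$-fold whose primitive ray generators project to a configuration of points in $\P^{d-1}$ whose blowup is not a Mori dream space. In contrast to Theorems~\ref{thm:CoxRing} and \ref{thm:EffCones}, where the Klyachko filtrations may be chosen very generally, here the configuration is rigidly determined by the fan and must consist of rational points. I must therefore invoke a known non-finite-generation result applicable to a specific rational configuration; the characteristic restriction in the theorem reflects the range in which such results are available. For $d = 3$, the natural input is the blowup of $\P^2$ at nine points lying on a smooth plane cubic with non-torsion sum in the group law, which is known to not be a Mori dream space away from characteristics $2$ and $3$.

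Concretely I would first fix nine such rational points in $\P^2$ and lift them to primitive lattice vectors in $\Z^3$, then complete the partial fan on these rays to a smooth projective fan $\Sigma'$ via a sequence of star subdivisions. Since further blowups of a non-Mori-dream-space variety remain non-Mori-dream, additional rays introduced to achieve smoothness and projectivity may be added without spoiling the non-finite-generation property. For $d > 3$, analogous constructions work: one can embed the nine-point configuration into a hyperplane of $\P^{d-1}$, or take a product with $(\P^1)^{d-3}$, noting that the cotangent bundle of a product decomposes as a direct sum of pullbacks so the associated point configuration in $\P^{d-1}$ still contains the non-Mori-dream-space subconfiguration sitting in $\P^2$.

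The principal obstacle is the rigidity of the setup: smoothness of the fan imposes strong integrality constraints on the primitive ray generators, so the points in $\P^{d-1}$ cannot be placed freely. Producing a smooth projective fan whose rays realize a prescribed non-Mori-dream-space point configuration, while simultaneously verifying projectivity of the total space, is the most delicate combinatorial step. The characteristic hypothesis is inherited directly from the classical non-finite-generation result for the nine-point blowup of $\P^2$ invoked at the three-dimensional stage.
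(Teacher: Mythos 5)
Your reduction is the same as the paper's: twist $\Omega^1_X$ by the anticanonical bundle so that the Klyachko filtrations satisfy (\ref{eq:standardform}), apply Theorem~\ref{thm:polynomial}, and conclude that finite generation of $\cR(\P(\Omega^1_X))$ is equivalent to that of $\cR(\Bl_S \P^{d-1})$ for the configuration $S$ of points determined by the rays. The source of the characteristic hypothesis is also essentially the one the paper uses (Totaro's rational nine-point configurations). But there is a genuine gap at exactly the step you flag as ``the most delicate combinatorial step'' and then do not carry out: you propose to \emph{first} fix nine rational points on a cubic with non-torsion sum and \emph{then} lift them to primitive vectors spanning rays of a smooth projective fan. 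It is not at all clear that an arbitrarily prescribed configuration admits such a lift --- smoothness forces each maximal cone to be spanned by a lattice basis, which is a severe constraint on which sets of primitive vectors can occur as rays. The paper avoids this by running the construction in the opposite direction (Example~\ref{ex:Dim3}): it starts from $\P^3$ and performs an explicit sequence of stellar subdivisions along rays that are sums of generators of existing cones, so smoothness and projectivity are automatic, and only \emph{afterwards} checks that nine of the resulting fourteen points $v_i^\perp$ happen to form a complete intersection of two smooth cubics, to which Totaro's non-finite-generation theorem applies. Without an explicit fan (or an existence argument for one), your proof of the base case $d=3$ is incomplete.

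Two further points need attention. First, for $d>3$ you assert that a configuration in $\P^{d-1}$ containing a bad subconfiguration inside a linear $\P^2$ again yields a non-Mori-dream blowup; this is true but requires an argument --- the paper proves it as Lemma~\ref{lem:hyperplane} (the Cox ring of $\Bl_S\P^d$ for $S$ in a hyperplane $H$ is a polynomial ring over $\cR(\Bl_S H)$, via Proposition~\ref{prop:HS}) and then iterates. Second, your alternative of taking a product with $(\P^1)^{d-3}$ introduces pairs of opposite rays, hence repeated hyperplanes $v^\perp=(-v)^\perp$, so condition (\ref{eq:standardform}) fails (the nonzero $F_j$ must be distinct) and Theorem~\ref{thm:polynomial} does not apply directly; one would need the nonseparated-quotient analysis of Section~\ref{sec:repetitions}. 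The paper's inductive step sidesteps both issues by adjoining the two new rays $(1,\ldots,1)$ and $(1,\ldots,1,-1)$, which are not opposite and give distinct hyperplanes when the characteristic is not two.
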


\noindent In this respect, cotangent bundles behave quite differently from tangent bundles, since the Cox ring of the projectivization of the tangent bundle on any smooth toric variety is finitely generated \cite[Theorem~5.8]{HausenSuess10}.  So, Theorem~\ref{thm:CotangentBundle} shows that there are toric vector bundles $\F$ such that $\P(\F)$ is a Mori dream space, but the projectivized dual bundle $\P(\F^\vee)$ is not.

\begin{remark}
In Theorems~\ref{thm:CoxRing} and \ref{thm:EffCones}, we assume the field is uncountable in order to choose a configuration of points in very general position in the projective space $\P^{r-1}$.  Examples constructed by Totaro in his work on Hilbert's 14th Problem over finite fields \cite{Totaro08} show that this restriction on the cardinality of the field is not necessary in some cases.  For instance, to prove these theorems in the special case where $r$ is three, it is enough to find a configuration $S$ of nine points in $\P^2(\k)$ such that $\Bl_S \P^2$ contains infinitely many $-1$-curves, and Totaro constructed such configurations over $\Q$ and over $\mathbf F_p$, for $p > 3$.
\end{remark}

We conclude the introduction with an example of a projectivized rank three bundle on an iterated blowup of $\P^1 \times \P^1$ at seven points whose effective cone agrees with the effective cone of $\P^2$ blown up at nine very general points, and hence is not polyhedral.

\begin{example}\label{ex:dim2}
Let $X(\Sigma)$ be the toric variety obtained by first blowing up one of the toric fixed points on $\P^1 \times \P^1$, then blowing up both of the toric fixed points in the exceptional divisor, and then blowing up all four of the torus fixed points in the new exceptional divisors.  The corresponding fan is as shown.

\bigskip

\begin{center}
\begin{picture}(160,160)(-80,-80)
\put(0,0){\circle*{2.5}}
\put(-80,0){\line(1,0){160}}
\put(0,-80){\line(0,1){160}}

\put(-20,0){\circle*{2.5}}
\put(0,-20){\circle*{2.5}}
\put(20,0){\circle*{2.5}}
\put(0,20){\circle*{2.5}}
\put(20,20){\circle*{2.5}}
\put(20,40){\circle*{2.5}}
\put(40,20){\circle*{2.5}}
\put(60,20){\circle*{2.5}}
\put(20,60){\circle*{2.5}}
\put(40,60){\circle*{2.5}}
\put(60,40){\circle*{2.5}}

\put(0,0){\line(1,1){65}}
\put(0,0){\line(1,2){39.5}}
\put(0,0){\line(1,3){26.6}}
\put(0,0){\line(2,3){50}}
\put(0,0){\line(2,1){79}}
\put(0,0){\line(3,1){80}}
\put(0,0){\line(3,2){75}}

\put(-75,10){$\rho_{10}$}
\put(8,-75){$\rho_{11}$}

\put(-45,-45){$\sigma$}

\end{picture}
\end{center}

\bigskip

\noindent Note that every ray of the fan is contained in either the cone $\sigma$ spanned by $\rho_{10}$ and $\rho_{11}$, or in $-\sigma$, and $\frac{1}{3} + \frac{1}{11-2-3} = \frac{1}{2}$.   So $X(\Sigma)$ satisfies the hypotheses of Theorems~\ref{thm:CoxRing} and \ref{thm:EffCones}.

Let $F$ be a three-dimensional vector space, and define filtrations
\begin{equation*}
F^{\rho_i}(j) = \left \{ \begin{array}{ll} F & \mbox{ for } j \leq 0, \\
								  F_i  & \mbox{ for } j = 1, \\
								  0 & \mbox{ for } j > 1,
								  \end{array} \right.
\end{equation*}
where $F_1, \ldots, F_9$ are two-dimensional subspaces in very general position, and $F_{10}$ and $F_{11}$ are zero.  By \cite{Klyachko90} these filtrations give rise to a toric vector bundle $\F$ on $X(\Sigma)$, see also Section~\ref{sec:prelim}. The subspaces $F_1, \ldots, F_9$ correspond to a set $S = \{p_1, \ldots, p_9\}$ of nine points in very general position in the projective plane $\P_F$ of one-dimensional quotients of $F$.  Our first main construction, in Section~\ref{sec:CoxRings}, shows that the Cox ring of $\P(\F)$ is canonically isomorphic to a polynomial ring in two variables over the Cox ring of the blowup $\Bl_S \P_F$ of the plane at this set of points.  Furthermore, in Section~\ref{sec:EffCones} we give an isomorphism of class groups $\Cl(\P(\F)) \xrightarrow{\sim} \Cl(\Bl_S \P_F)$ that takes $\O(1)$ to the pullback of the hyperplane class of $\P_F$, and the class of $\P(\F|_{D_{\rho_i}})$ to the class of the exceptional divisor $E_i$, for $i = 1, \ldots, 9,$ and show that this isomorphism induces an identification of the effective cones of the two spaces.  Therefore, the pseudoeffective cone of $\P(\F)$, like the pseudoeffective cone of $\Bl_S \P_F$, is not polyhedral, and $\P(\F)$ is not a Mori dream space.
\end{example}

\noindent {\textbf{Acknowledgments.}}  We are grateful to the mathematics department of Free University, Berlin, where much of this collaboration took place, and our hosts K. Altmann and C. Haase, for their warm and generous hospitality.  The second author was partially supported by NSF grant DMS 1001859.  The third author was partially supported by NSF grant DMS 1068689.  We thank A.~Bayer,  J. Hausen, A. Laface, and M. Musta\c{t}\u{a} for useful discussions, as well as J. Tevelev, B. Totaro, and the referee for helpful comments on an earlier draft of this paper.

\section{Preliminaries} \label{sec:prelim}

We work over an uncountable field $\k$ of arbitrary characteristic with the exception of the proof of Theorem~\ref{thm:CotangentBundle}, where we restrict to characteristic not two or three.

Let $T$ be a torus of dimension $d$, with character lattice $M$.  Let $X(\Sigma)$ be a toric variety with dense torus $T$, and let $\rho_1, \ldots, \rho_n$ be the rays of $\Sigma$.  We write $v_j$ for the primitive generator in $N = \Hom(M,\Z)$ of the ray $\rho_j$, and $D_{\rho_j}$ for the corresponding prime $T$-invariant divisor in $X(\Sigma)$.

Suppose $\F$ is a toric vector bundle of rank $r$ on $X(\Sigma)$. 
The Klyachko filtrations associated to $\F$ are decreasing filtrations of the fiber $F$ over the identity $1_T$, indexed by the rays of $\Sigma$,
\[
\cdots \supset F^{\rho_j}(k-1) \supset F^{\rho_j}(k) \supset F^{\rho_j}(k+1) \supset \cdots,
\]
and characterized by the following property.  If $U_\sigma$ is the torus-invariant affine open subvariety of $X(\Sigma)$ corresponding to a cone $\sigma$ in $\Sigma$, then the
torus $T$ acts on $H^0(U_\sigma,\F)$ by $(ts)(x) = t(s(t^{-1}x))$. 
If $u$ is a character of the torus, then the space of isotypical sections
\[
H^0(U_\sigma, \F)_u = \{ s \in H^0(U_\sigma, \F) \ | \ ts  
=\chi^{u}(t) s \text{ for all $t\in T$}\}
\]
injects into $F$, by evaluation at $1_T$, and the image is
\[
F^\sigma_u = \bigcap_{\rho_j \preceq \sigma} F^{\rho_j}(\<u, v_j\>).
\]
In particular, if $F^{\rho_j}(0) = F$ for all $j$ then the space of $T$-invariant sections of $\F$ is canonically isomorphic to $F$.

The Klyachko filtrations satisfy the following compatibility condition.

\begin{KCC} 
For each maximal cone $\sigma \in \Sigma$, there are lattice points $u_1, \ldots, u_r \in M$ and a decomposition into one-dimensional subspaces $F = L_1 \oplus \cdots \oplus L_r$ such that 
\[
F^{\rho_j}(k) = \bigoplus_{\< u_i, v_j \> \, \geq \, k} L_i,
\]
 for each $\rho_j \preceq \sigma$ and all $k \in \Z$.
\end{KCC}

\noindent The bundle $\F$ can be recovered from the family of filtrations $\{ F^{\rho_j}(k)\}$, and the induced correspondence between toric vector bundles and finite dimensional vector spaces with compatible families of filtrations gives an equivalence of categories.  See the original paper \cite{Klyachko90} or the summary in \cite[Section~2]{moduli} for details.

We write $\P(\F)$ for the projective bundle $\Proj(\Sym (\F))$ of rank one quotients of $\F$, and
\[
\pi: \P(\F) \rightarrow X(\Sigma)
\]
for its structure map.  The fiber of $\P(\F)$ over $1_T$ is the projective space $\P_F$ of one-dimensional quotients of $F$.  If $F'$ is a linear subspace of $F$ then
\[
\P_{F/F'} \subset \P_F
\]
is a projective linear subspace of codimension equal to the dimension of $F'$.

Following the usual convention, we write $\O(1)$ for the tautological quotient bundle on $\P(\F)$, which is relatively ample with respect to $\pi$, and $\O(m)$ for its $m$th tensor power.

For our primary examples in this paper, we will focus on bundles whose filtrations are especially simple, and in particular those satisfying
\begin{equation}
\label{eq:standardform}
F^{\rho_j}(k) = \left \{ \begin{array}{ll} F & \mbox{ for } k \leq 0, \\
								  F_j  & \mbox{ for } k = 1, \\
								  0 & \mbox{ for } k > 1,
								  \end{array} \right.  \tag{*}
\end{equation}
where $F_j$ is either $0$ or a subspace of $F$ of dimension at least two, and all of the nonzero $F_j$ are distinct.

One reason for working with a bundle given by filtrations satisfying (\ref{eq:standardform}) is that the  $T$-invariant global sections of $\O(m)$ on $\P(\F)$, and their orders of vanishing along the divisors $\pi^{-1}(D_{\rho_j})$, are particularly easy to understand.  See Lemmas~\ref{lem:sections} and \ref{lem:hypersurfaces}.

\begin{remark}
  \label{rem:Compatibility}
Suppose $\{F^{\rho_j}(j)\}$ is a collection of filtrations satisfying (\ref{eq:standardform}) in which all of the $F_j$ are hyperplanes.  Using the fact that $X(\Sigma)$ is smooth, one checks that Klyachko's compatibility condition for a cone $\sigma$ is satisfied for some $u_1 \ldots, u_r$ if and only if the hyperplanes $F_j$ for $\rho_j \preceq \sigma$ intersect transversely.  Since at most $r$ hyperplanes can meet transversely in a vector space of rank $r$, the condition $r \geq d$ appearing in Theorems~\ref{thm:CoxRing} and \ref{thm:EffCones} is necessary for such a collection of filtrations to define a toric vector bundle.  If the $F_j$ are chosen in general position, then the condition $r \geq d$ is also sufficient.
\end{remark}

\section{Torus quotients and Cox rings} \label{sec:CoxRings}

Let $X$ be a smooth variety whose divisor class group is finitely generated and torsion free.  Choose divisors $D_1, \ldots, D_k$ whose classes form a basis for the class group $\Cl(X)$.  Then the Cox total coordinate ring of $X$ is 
\[
\cR(X) = \bigoplus_{(m_1, \ldots, m_k) \in \Z^k} H^0(X, \O(m_1D_1 + \cdots + m_k D_k)),
\]
with the natural multiplication map of global sections.  See \cite{HuKeel00} for further details and a discussion of the special properties of Mori dream spaces, those varieties whose Cox rings are finitely generated.  If $X_0 \subset X$ is an open subvariety whose complement has codimension at least two, then $\Cl(X_0)$ and $\cR(X_0)$ are naturally identified with $\Cl(X)$ and $\cR(X)$, respectively.

\begin{remark}
Cox rings can be defined in greater generality, for possibly singular and nonseparated prevarieties whose class groups are finitely generated, but may contain torsion \cite{Hausen08}.  Cox rings of smooth and separated varieties with torsion free class groups suffice for all of the purposes of this paper, although we do consider nonseparated quotients in some generalizations of Theorem~\ref{thm:polynomial} presented in Section~\ref{sec:generalizations}.
\end{remark}

Our main technical result is a description of the Cox ring of certain projectivized toric vector bundles as a polynomial ring over the Cox ring of a blowup of projective space.  Let $S$ be a finite set of projective linear subspaces of $\P_F$ and let $S'$ be the set of intersections of subspaces in $S$.  Say $L_1, \ldots, L_s$ are the elements of $S'$.  We write $\Bl_{S'} \P_F$ for the space obtained by blowing up first the points in $S'$, then the strict transforms of the lines in $S'$, then the strict transforms of the two-dimensional subspaces in $S'$, and so on.  We write $E_i$ for the exceptional divisor in $\Bl_{S'} \P_F$ dominating $L_i$, and define
\[
\Bl_S \P_F = \Bl_{S'} \P_F \smallsetminus \Big( \bigcup_{L_i \not \in S} E_i \Big)
\]

\begin{example}
  Let $x_1$, $x_2$, and $x_3$ be non-collinear points in $\P^3$, and let $L_{ij}$ be the line through $x_i$ and $x_j$, and set
  \[
  S = \{ x_1, L_{12}, L_{13}, L_{23} \}.
  \]
  Then $S' = S \cup \{x_2, x_3\}$ and $\Bl_S \P^3$ is the space obtained by blowing up first the points $x_1$, $x_2$, and $x_3$, and then the strict transforms of the lines $L_{12}$, $L_{13}$, and $L_{23}$, and then removing the exceptional divisors over $x_2$ and $x_3$. 
  \end{example}
  
 Our main technical result can now be stated as follows.  Let $\F$ be a toric vector bundle on a complete toric variety $X$ given by filtrations satisfying the condition (\ref{eq:standardform}) discussed in Section~\ref{sec:prelim}.  After renumbering, say the $F_i$ are distinct linear subspaces for $i \leq s$, and $F_j$ is zero for $s < j \leq n$.  Let
\[
S = \{ \P_{F/F_1}, \ldots, \P_{F/F_s} \}
\]
be the set of projective linear subspaces in $\P_F$ corresponding to $F_1, \ldots, F_s$.

\begin{thm} \label{thm:polynomial}
The Cox ring $\cR(\P(\F))$ is isomorphic to a polynomial ring in $n-s$ variables over $\cR(\Bl_S \P_F)$.
\end{thm}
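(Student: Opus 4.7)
I would compute both Cox rings as explicit multigraded rings and match them, using the Klyachko description of $\F$ together with the pushforward identity $\pi_*\O(m) = \Sym^m \F$.  The Cox ring $\cR(\P(\F))$ decomposes as a direct sum of graded pieces $H^0(X, \Sym^m \F \otimes \O(\sum a_j D_{\rho_j}))$ indexed by tuples $(m, a_1, \ldots, a_n)$, and the $T$-action further decomposes each summand into isotypical components indexed by characters $u \in M$.  By Klyachko's isotypical formula from Section~\ref{sec:prelim}, the $u$-component equals $\bigcap_j (\Sym^m F)^{\rho_j}(\langle u, v_j\rangle - a_j)$.

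For filtrations of standard form~(\ref{eq:standardform}), the induced symmetric-power filtrations simplify explicitly: one computes $(\Sym^m F)^{\rho_j}(k) = F_j^{\min(k,m)} \cdot \Sym^{m-\min(k,m)} F$ for $k \geq 0$, the full $\Sym^m F$ for $k \leq 0$, and zero whenever $k \geq 1$ and $F_j = 0$.  Consequently the $u$-component either vanishes---namely when $\langle u, v_j\rangle > a_j$ for some $j > s$---or it identifies with the space of degree-$m$ polynomials on $F$ vanishing to order at least $\max(\langle u, v_j\rangle - a_j,\,0)$ along each subspace $\P_{F/F_j}$ with $j \leq s$.  These are precisely the graded pieces of $\cR(\Bl_S \P_F)$: by the standard description of sections on a blowup, $H^0(\Bl_S \P_F, mH - \sum_{j \leq s} b_j E_j)$ consists of degree-$m$ forms on $F$ vanishing to order at least $b_j$ along $L_j = \P_{F/F_j}$, and the higher intersections in $S' \smallsetminus S$ contribute no additional constraints because the corresponding exceptional divisors of $\Bl_{S'}\P_F$ are removed in passing to $\Bl_S \P_F$.

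Given this identification, I would define a homomorphism $\Phi \colon \cR(\Bl_S \P_F)[y_{s+1}, \ldots, y_n] \to \cR(\P(\F))$ by sending each $y_j$ to the canonical $T$-invariant section of $\pi^{-1}(D_{\rho_j})$ and sending $p \in H^0(mH - \sum b_j E_j)$ to the $T$-invariant section of $m\O(1) - \sum_{j \leq s} b_j \pi^{-1}(D_{\rho_j})$ determined by the polynomial $p$.  Monomials in the $y_j$'s shift across divisor classes, and distinct isotypical components within a single class are obtained by shifting the divisor representative by the principal divisor of a character in $M$.  Multiplicativity of $\Phi$ follows from the fact that a product of forms vanishing to orders $b$ and $b'$ along a subvariety vanishes to order at least $b + b'$, combined with compatibility of symmetric-power multiplication with Klyachko filtrations; surjectivity and injectivity then follow by exhausting every isotypical component of every graded piece.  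The main hurdle is the bookkeeping between the natural $\Z^{n+1}$-grading on the polynomial ring and the $\Cl(\P(\F))$-grading on the target via the $M$-action, and this is cleanly managed by invoking the Hausen and S\"u{\ss} description of Cox rings of $T$-varieties: one identifies $\Bl_S \P_F$ as the appropriate quotient of $\P(\F)$ by $T$ and reads the polynomial-ring extension directly from their general formula.
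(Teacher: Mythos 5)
Your approach is genuinely different from the paper's. The paper proves Theorem~\ref{thm:polynomial} geometrically: it constructs the $T$-invariant rational map $\varphi\colon\P(\F)\dashrightarrow\Bl_S\P_F$, produces open sets $U\subset U'$ whose complements have codimension two, shows that $T$ acts freely on $U$ with $\varphi|_U$ a separated geometric quotient (via a toric computation in the one-ray case, Proposition~\ref{prop:oneray}, then gluing over the affine charts of the base), and only then applies the Hausen--S\"u{\ss} result (Proposition~\ref{prop:HS}). You instead propose to match the two Cox rings graded piece by graded piece, using the Klyachko isotypical formula and the symmetric-power filtrations. That route is viable: the paper carries out exactly this kind of computation in Section~\ref{sec:EffCones} (Lemmas~\ref{lem:sections} and~\ref{lem:hypersurfaces}) to obtain the class-group map and the effective cone without constructing $\varphi$, and pushing it to the full Cox ring, as you suggest, would give an independent and more explicit proof. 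Your observation that the exceptional divisors over the intersections in $S'\smallsetminus S$ impose no conditions because they are removed is the right one. What the computational route costs is the bookkeeping you defer: fixing divisor representatives, checking that the correspondence $u\mapsto(c,e)$ between characters and integer lifts of a class in $\Z^n/M$ is a bijection onto each coset (this uses injectivity of $M\to\Z^n$, i.e.\ completeness of the fan), and verifying multiplicativity, which becomes automatic once every isotypical piece is realized inside $\Sym^m F$ times a Laurent monomial in the $y_j$.

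Two points need repair. First, your closed formula for the induced filtration is wrong for $k>m$: $(\Sym^m F)^{\rho_j}(k)$ is zero there, not $\Sym^m F_j$. This does not damage your final description, since a nonzero degree-$m$ form cannot lie in $I_{L_j}^k$ for $k>m$, but the intermediate formula should be stated as in Lemma~\ref{lem:sections}. Second, and more seriously, your closing sentence undercuts the argument: you propose to handle the grading bookkeeping by ``identifying $\Bl_S\P_F$ as the appropriate quotient of $\P(\F)$ by $T$'' and reading off the answer from Hausen--S\"u{\ss}. That identification is precisely the nontrivial geometric content of the paper's proof---one must construct $\varphi$, control its indeterminacy locus, verify the codimension-two and free-action hypotheses, and prove that the quotient is geometric and separated---none of which your computation supplies, and none of which you need if you finish the direct comparison. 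Either complete the graded-piece matching on its own terms, or carry out the quotient construction; as written, the proposal leans on the harder half of the theorem at exactly the step where it claims to dispose of the remaining details.
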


Our proof of Theorem~\ref{thm:polynomial} will be an application of the following presentation of Cox rings for certain varieties with torus actions.

\begin{prop} \label{prop:HS}
Let $X$ be a smooth variety such that $H^0(X,\mathcal{O}_X^{*}) = \k^*$ and $\Cl(X)$ is free, and let $T$ be a torus acting on $X$.
Suppose $D_1, \ldots, D_h$ are irreducible divisors in $X$ with positive dimensional generic stabilizers, $T$ acts freely on $X \smallsetminus (D_1 \cup \cdots \cup D_h)$, and the geometric quotient is a smooth variety $Y$ with free class group.  Then $\cR(X)$ is isomorphic to a polynomial ring in $h$ variables over $\cR(Y)$.
\end{prop}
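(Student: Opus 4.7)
The plan is to construct an explicit graded ring homomorphism $\phi: \cR(Y)[x_1, \ldots, x_h] \to \cR(X)$ sending $x_i$ to the canonical section $s_i \in H^0(X, \mathcal{O}(D_i))$ of the divisor $D_i$, and to verify that it is an isomorphism by comparing the two sides after restricting to the open locus where the $T$-action is free. First I would set $U = X \smallsetminus (D_1 \cup \cdots \cup D_h)$; by hypothesis $\pi: U \to Y$ is a principal $T$-bundle. Because $U$ is the complement of the divisors cut out by the $s_i$, removing them corresponds to inverting their defining sections, so $\cR(U)$ is identified with the localization $\cR(X)[s_1^{-1}, \ldots, s_h^{-1}]$. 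The $T$-invariance of each $D_i$ forces $s_i$ to be $T$-semi-invariant of some weight $u_i \in M$.

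The core of the argument is to describe $\cR(U)$ in terms of $\cR(Y)$. For a principal $T$-bundle $\pi: U \to Y$ the pushforward decomposes as $\pi_* \mathcal{O}_U = \bigoplus_{m \in M} \mathcal{L}_m$ with line bundles $\mathcal{L}_m$ on $Y$, and more generally pullback gives a natural isomorphism between the $T$-equivariant class group of $U$ and $\Cl(Y)$. Decomposing global sections by $T$-weight and summing over all classes yields a graded isomorphism $\cR(U) \cong \cR(Y)[s_1^{\pm 1}, \ldots, s_h^{\pm 1}]$, where the Laurent variables are identified with the images of the $s_i$ via their weights $u_i$. The hypotheses $H^0(X, \mathcal{O}_X^*) = \k^*$ and freeness of $\Cl(X)$ and $\Cl(Y)$ are used to rule out torsion and invertible-function obstructions to this identification.

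With this in hand, the proof is completed by intersecting inside $\cR(U)$: an element of $\cR(U) = \cR(Y)[s_i^{\pm 1}]$ lies in $\cR(X)$ precisely when it extends regularly across each $D_i$, which under the Laurent description means every $s_i$ appears with non-negative exponent. Hence $\cR(X) = \cR(Y)[s_1, \ldots, s_h]$, polynomial in $h$ variables over $\cR(Y)$. The main obstacle lies in the middle step: producing the graded isomorphism $\cR(U) \cong \cR(Y)[s_i^{\pm 1}]$ requires tracking $T$-equivariant structures on line bundles carefully, matching the weights $u_i$ with a basis for the image of $M \to \Cl(Y)$, and reconciling the $\Cl(X)$-grading with the $\Cl(Y) \oplus \Z^h$-grading under the appropriate relations. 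This is where the bulk of the work lies, and where the Hausen--S\"u{\ss} framework for Cox rings of varieties with torus actions provides the essential tools.
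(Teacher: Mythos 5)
The paper does not give an independent proof of this proposition: its ``proof'' consists of observing that the statement is the special case of \cite[Theorem~1.1]{HausenSuess10} in which $X$ is smooth, the action off $D_1 \cup \cdots \cup D_h$ is free, and the quotient is separated with torsion-free class group. Your proposal is therefore best read as an outline of the argument behind that cited theorem, and its overall architecture --- send $x_i$ to the canonical section $s_i \in H^0(X, \O(D_i))$, pass to the $T$-free locus $U$, descend along the principal bundle $U \to Y$ using the weight decomposition, and recover $\cR(X)$ by imposing non-negative order of vanishing along each $D_i$ --- is the right one and is essentially the Hausen--S\"u{\ss} strategy.

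One intermediate step is stated incorrectly, though, and would fail if taken literally: the identification of $\cR(U)$ with the localization $\cR(X)[s_1^{-1}, \ldots, s_h^{-1}]$, and with it the claim $\cR(U) \cong \cR(Y)[s_1^{\pm 1}, \ldots, s_h^{\pm 1}]$. The ring $\cR(U)$ is graded by $\Cl(U) = \Cl(X)/\langle [D_1], \ldots, [D_h] \rangle$, whereas the localization remains $\Cl(X)$-graded; the two agree only when the classes $[D_i]$ are linearly independent in $\Cl(X)$, which the hypotheses do not guarantee. Already for $X = \P^1$ with the standard $\mathbf{G}_m$-action, $D_1 = \{0\}$, $D_2 = \{\infty\}$, and $Y = \Spec \k$, one has $\cR(U) = \k[t, t^{-1}]$ of Krull dimension one, while $\cR(X)[s_1^{-1}, s_2^{-1}] = \k[x^{\pm 1}, y^{\pm 1}]$ and $\cR(Y)[s_1^{\pm 1}, s_2^{\pm 1}]$ have dimension two (here the proposition itself is fine: $\cR(\P^1) = \k[x,y]$ is a polynomial ring in $h = 2$ variables over $\k$). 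The repair is to work throughout with the $\Cl(X)$-graded object $\bigoplus_{c \in \Cl(X)} H^0(U, \O(c)|_U)$, which \emph{is} the localization, prove that this ring is $\cR(Y)[s_1^{\pm 1}, \ldots, s_h^{\pm 1}]$ by the descent and weight-space argument you indicate, and then intersect as in your final step. With that correction your sketch reproduces the proof of the cited theorem; the remaining work you defer --- matching the weights of the $s_i$ against $\Cl_T(U) \cong \Cl(Y)$ and using $H^0(X, \O_X^*) = \k^*$ together with freeness of the class groups to kill the unit and torsion obstructions --- is exactly what \cite{HausenSuess10} carries out.
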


\begin{proof}
This is the special case of \cite[Theorem~1.1]{HausenSuess10}, where $X$ is smooth, the $T$-action on the complement of $D_1 \cup \cdots \cup D_h$ is free, and the geometric quotient $Y$ is separated, with torsion free class group.  Although stated in the case where X is complete, the proof of that theorem is also given under the assumption that $H^0(X, \O_X^*)= \k^*$ and $\Cl(X)$ is free, which is what we use here. 
\end{proof}

We prove Theorem~\ref{thm:polynomial} by constructing a dominant rational map
\[
\varphi: \P(\F) \dashrightarrow \Bl_S \P_F,
\]
and producing open sets $U \subset U'$ in $\P(\F)$ with the following properties:
\begin{enumerate}
\item The complement of $U'$ has codimension 2 in $\P(\F)$.
\item There are $n-s$ irreducible divisors in $U'$ with positive dimensional generic stabilizers, the complement of these divisors is $U$, and $T$ acts freely on $U$.
\item The restriction $\varphi|_U$ is regular and a geometric quotient.
\item The complement of $\varphi(U)$ has codimension 2 in $\Bl_S \P_F$.
\end{enumerate}
\noindent To see that Theorem~\ref{thm:polynomial} follows from the existence of such a map, first note that class groups, global invertible functions and Cox rings are all invariant under the removal of sets of codimension 2.  Therefore, (1) implies that $H^0(U', \mathcal{O}_{U'}^{*}) = \k^*$, the class group $\Cl(U')$ is free, and $\cR(U') \cong \cR(\P(\F))$.  Then, by Proposition~\ref{prop:HS}, (2) and (3) imply that $\cR(U')$ is isomorphic to a polynomial ring in $n-s$ variables over $\cR(\varphi(U))$.  Finally, (4) gives $\cR(\varphi(U)) \cong \cR(\Bl_S \P_F)$.  Therefore, (1)-(4) together imply that $\cR(\P(\F))$ is isomorphic to a polynomial ring in $n-s$ variables over $\cR(\Bl_S \P_F)$.

We construct the birational map $\varphi$ and open sets $U$ and $U'$ as follows.  There is a unique dominant, $T$-invariant rational map
\[
\psi: \P(\F) \dashrightarrow \P_F
\]
that restricts to the identity on the fiber $\P_F$ over $1_T$.  Over the dense torus $T$, this map takes a point $x$ in the fiber over $t$ to $t^{-1} \cdot x$.  One can also describe $\psi$ as the rational map associated to the $T$-invariant linear series $H^0(\P(\F), \O(1))_0$; the sections of $\O(1)$ are canonically identified with sections of $\F$, and evaluation at $1_T$ maps $H^0(X, \F)_0$ isomorphically onto $F$.  Alternatively, $\psi$ can be constructed directly from the $T$-invariant sections of $\F$, which generate all fibers over $T$, following the general construction in \cite[Example~6.1.15]{PAG1}.  

Since $\psi$ is dominant, it induces a $T$-invariant rational map $\varphi$ to $\Bl_S \P_F$.  To prove Theorem~\ref{thm:polynomial}, we produce open sets $U \subset U'$ in $\P(\F)$ satisfying (1)-(4) with respect to $\varphi$.

We write $x_i$ for the distinguished point in the codimension one orbit corresponding to $\rho_i$; see \cite[Section~2.1]{Fulton93}.  The fiber of $\F$ over $x_i$ is canonically isomorphic to $F_i \oplus F/F_i$; this is the eigenspace decomposition for the action of the one parameter subgroup corresponding to the primitive generator of $\rho_i$, which is the stabilizer of $x_i$.  Let $Z_i$ be the projective linear subspace corresponding to $\P_{F_i}$ in the fiber of $\P(\F)$ over $x_i$.  Let $W_i$ be the projective linear subspace corresponding to $\P_{F/F_i}$ in the fiber over $1_T$.

We now define $U'$ to be the complement in $\P(\F)$ of the following closed subsets.
\begin{itemize}
\item The preimages of the $T$-invariant closed subsets of codimension 2 in $X$.
\item The torus orbit closures $\overline{T \cdot Z_i}$, for $F_i \neq 0$.
\item The torus orbit closures $\overline{T \cdot W_i}$, for $F_i \neq 0$.
\end{itemize}
Note that the condition (\ref{eq:standardform}) says that $F_i$ has dimension at least 2 and codimension at least 1 whenever it is nonzero.  Therefore, every component of the complement of $U'$ has codimension at least 2.

This choice of closed subsets is closely related to the intedeterminacy locus of $\varphi$.  On the fiber over $1_T$, this map is the birational inverse of the blowup morphism from $\Bl_S \P_F$ to $\P_F$, so its indeterminacy locus is the discriminant, which is the union of the $W_i$.  The closures $\overline{T \cdot W_j}$ may meet the fiber over $x_i$, and these intersections are also in the indeterminacy locus of $\varphi$ because the indeterminacy locus is closed.  In the special case where $i = j$, the intersection of $\overline{T \cdot W_i}$ with the fiber over $x_i$ is the linear subspace $\P_{F/F_i}$.  Now, $\varphi$ maps the fiber over $x_i$ into the exceptional divisor over $\P(F/F_i)$, via the canonical rational map
\[
\P(F_i \oplus F/ F_i) \dashrightarrow \P_{F/F_i} \times \P_{F_i},
\]
which is regular away from the linear subspaces $W_i$ and $Z_i$.  In particular, $Z_i$ is the only remaining indeterminacy locus of $\varphi$ in the fiber over $x_i$.  Therefore, after removing the preimage of the codimension 2 strata in $X$, the open set $U'$ is simply the locus where $\varphi$ is regular.

For $i = s+1, \ldots, n$, the subspace $F_i$ is zero.  Then the one parameter subgroup corresponding to the primitive generator of $\rho_i$ acts trivially on $\P(\F|_{O_{\rho_i}})$.  Let $U$ be the complement in $U'$ of these $n-s$ irreducible divisors with positive dimensional stabilizers.

We claim that $T$ acts freely on $U$.  Over the dense torus, $T$ acts freely on the base.  Over a codimension one orbit $O_{\rho_i}$, the stabilizer on the base is the one-parameter subgroup corresponding to the primitive generator of $\rho_i$.  Because the eigenspace decomposition of the fiber of $\F$ over $x_i$ is $F_i \oplus F/ F_i$, with the one parameter subgroup acting by scaling on $F_i$ and trivially on $F/F_i$, this subgroup acts freely away from the $T$-orbits of the linear subspaces $\P_{F_i}$ and $\P_{F/F_i}$, both of which are in the complement of $U'$ and hence of $U$.  Therefore, $T$ acts freely on $U$.  

To prove the theorem, it remains to show that $\varphi|_U$ is a geometric quotient and the image $\varphi(U)$ has codimension 2 in $\Bl_S \P_F$.  We first treat the special case where the fan $\Sigma$ has only a single ray $\rho$.  Let $U_\rho$ be the toric variety corresponding to a single ray $\rho$, and let $\F$ be the toric vector bundle on $U_\rho$ given by the filtration
\[
F^{\rho}(k) = \left \{ \begin{array}{ll} F & \mbox{ for } k \leq 0, \\
								  F_\rho  & \mbox{ for } k = 1, \\
								  0 & \mbox{ for } k > 1,
								  \end{array} \right.
\]
where $F_\rho$ is a proper subspace of dimension at least two.  Then $\F$ splits canonically as a sum $\F = \F_\rho \oplus \F / \F_\rho$, where $\F_\rho$ is the toric subbundle with fiber $F_\rho$ over $1_T$.  Let $Z$ be the projective linear subspace $\P_{F_\rho}$ in the fiber over $x_\rho$, and let $W$ be the projective linear subspace $\P_{F/F_\rho}$ in the fiber over $1_T$.

\begin{prop}  \label{prop:oneray}
The torus $T$ acts freely on the open set
\[
\P(\F) \smallsetminus (\overline{T \cdot Z} \cup \overline{T \cdot W})
\]
with geometric quotient $\Bl_{W} \P_F$, and the preimage of $O_\rho$ under the structure map surjects onto the exceptional divisor over $W$.
\end{prop}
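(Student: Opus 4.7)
The strategy is to establish freeness of the $T$-action on $U$ via a stabilizer computation at $x_\rho$, to build the candidate quotient $\varphi: U \to \Bl_W \P_F$ from the $T$-invariant sections of $\O(1)$, and then to verify $\varphi$ is a geometric quotient by analyzing it separately over the dense torus $T \subset U_\rho$ and over the codimension-one orbit $O_\rho$.

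\textbf{Step 1: stabilizers and orbit closures.} The splitting $\F = \F_\rho \oplus \F/\F_\rho$ together with the Klyachko compatibility condition for the single cone $\rho$ identifies the fiber $\F_{x_\rho} = F_\rho \oplus F/F_\rho$ as the weight decomposition of the $\lambda_\rho$-action, with weights $1$ and $0$ respectively, where $\lambda_\rho$ is the one-parameter subgroup generated by $v_\rho$. Hence the $\lambda_\rho$-fixed locus in $\P(F_\rho \oplus F/F_\rho)$ is $Z \sqcup \P_{F/F_\rho}(x_\rho)$, and $\lambda_\rho$ acts freely on the complement because the two weights differ by $\pm 1$. Translating by $T$, the locus of nontrivial $T$-stabilizer in $\pi^{-1}(O_\rho)$ is exactly $\overline{T \cdot Z} \cup \overline{T \cdot \P_{F/F_\rho}(x_\rho)}$. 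To match the second piece with $\overline{T \cdot W} \cap \pi^{-1}(O_\rho)$, I would fix a $T$-eigenbasis $s_1, \ldots, s_r$ of $\F|_{U_\rho}$ with $s_i(x_\rho) \in F_\rho$ for $i \leq a := \dim F_\rho$ and $s_i(x_\rho)$ mapping to a basis of $F/F_\rho$ otherwise; in the resulting local coordinates, the $\lambda_\rho(u)$-flow of a point $[0:\cdots:0:y_{a+1}:\cdots:y_r] \in W$ over $1_T$ limits as $u \to 0$ to the point of $\P_{F/F_\rho}(x_\rho)$ with the same last $r-a$ coordinates. Combined with the obvious freeness over $T$, this gives freeness of $T$ on $U$.

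\textbf{Step 2: the quotient map.} Since $F^\rho(0) = F$, the $T$-invariant global sections of $\O(1)$ on $\P(\F)$ are canonically identified with $F$, and the associated linear system defines the $T$-invariant rational map $\psi: \P(\F) \dashrightarrow \P_F$ which restricts to the identity on the fiber over $1_T$. Composing with the birational inverse of the blowup $\Bl_W \P_F \to \P_F$ gives $\varphi$. Over $\pi^{-1}(T) \cap U$, the same eigenbasis trivialization $\P(\F)|_T \cong T \times \P_F$ makes $\psi$ the second projection, so $\varphi$ restricts to the projection $T \times (\P_F \setminus W) \to \P_F \setminus W = \Bl_W \P_F \setminus E_W$, which is manifestly a geometric $T$-quotient.

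\textbf{Step 3: the exceptional divisor (the main obstacle).} The delicate part is to check that $\varphi$ is regular on $\pi^{-1}(O_\rho) \cap U$ and that the resulting map realizes a geometric quotient onto the exceptional divisor $E_W \cong W \times \P_{F_\rho}$. In a blowup chart adapted to the decomposition $F = F_\rho \oplus F/F_\rho$, a direct computation shows that $\varphi$ sends a point $[q] \in \P(F_\rho \oplus F/F_\rho) \setminus (Z \cup \P_{F/F_\rho})$ of the fiber of $U$ over $x_\rho$ to $([q|_{F/F_\rho}], [q|_{F_\rho}]) \in \P_{F/F_\rho} \times \P_{F_\rho} = E_W$. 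This is precisely the standard geometric quotient of $\P(V_0 \oplus V_1) \setminus (\P(V_0) \cup \P(V_1))$ by the $\mathbb{G}_m$-action of weights $0, 1$. Extending by $T$-equivariance to the rest of $O_\rho$ completes the identification of the quotient with $\Bl_W \P_F$, and the surjection $\pi^{-1}(O_\rho) \cap U \twoheadrightarrow E_W$ asserted at the end of the proposition follows at once. I expect the most intricate step to be the chart-level verification that $\varphi$ extends regularly across the blowup and that the induced map agrees with the above $\mathbb{G}_m$-quotient picture on the nose.
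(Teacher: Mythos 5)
Your route is genuinely different from the paper's. The paper does not analyze $\varphi$ chart by chart: it uses the fact that $\P(\F)$ over the one-ray toric variety $U_\rho$ is itself a toric variety (via a Klyachko splitting $F = L_1 \oplus \cdots \oplus L_r$), writes down the fan $\Delta'$ of the open set $\P(\F) \smallsetminus (\overline{T\cdot Z} \cup \overline{T\cdot W})$ inside $N_\R \times \R^r/(1,\ldots,1)$, observes that the projection onto $\R^r/(1,\ldots,1)$ carries $\Delta'$ onto the fan of $\Bl_W \P_F$, and then invokes the combinatorial criterion of A'Campo-Neuen and Hausen for a toric morphism to be a geometric quotient. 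Your Steps 1 and 2 are correct and agree with the paper's preliminary discussion: the weight decomposition $F_\rho \oplus F/F_\rho$ of the fiber over $x_\rho$, freeness of the action away from $Z$ and $\P_{F/F_\rho}$, and the identification of $\psi$ over the dense torus with the projection $T \times \P_F \to \P_F$.

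The gap is in Step 3. What you actually establish there is that $\varphi$ is constant on orbits and that its fibers over each stratum ($\Bl_W\P_F \smallsetminus E_W$ and $E_W$) are single free orbits. That is strictly weaker than $\varphi$ being a geometric quotient: the conditions that $\varphi$ be submersive and that $\O_{\Bl_W\P_F} = (\varphi_*\O_U)^T$ are conditions on open subsets of $\Bl_W\P_F$ meeting both strata, and cannot be verified stratum by stratum; a $T$-invariant morphism that is bijective on orbits need not be a quotient. To close this you need an additional input --- for instance, miracle flatness (both spaces are smooth and the fibers are equidimensional of dimension $\dim T$) to conclude that $\varphi$ is faithfully flat, whence $U \to \Bl_W\P_F$ is a $T$-torsor and in particular a geometric quotient --- or else the toric criterion the paper uses, which settles the quotient property combinatorially in one stroke. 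Relatedly, the regularity of $\varphi$ along $\pi^{-1}(O_\rho)$ away from $Z$, which you yourself flag as the ``main obstacle'' and only sketch, is exactly what the explicit fan description hands you for free.
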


\begin{proof}
The open set $\P(\F) \smallsetminus (\overline{T \cdot Z} \cup \overline {T \cdot W})$ is the set denoted $U$ in the discussion above, and hence $T$ acts freely.  We use a toric computation to compute the geometric quotient.

The projectivization of any toric vector bundle $\G$ of rank $r$ on $U_\rho$ is isomorphic to a toric variety.  The toric variety is canonical, but the isomorphism depends on the choice of a splitting of the fiber over $1_T$,
\[
G = L_1 \oplus \cdots \oplus L_r,
\]
 satisfying Klyachko's compatibility condition.  Fix such a splitting.  For $1 \leq j \leq r$, define the integer $n_j = \max \{ k \ | \ G^{\rho}(k) \mbox{ contains } L_j \}$.  Let $\sigma$ be the cone in $N_\R \times \R^r$ spanned by the standard vectors $(0,e_1), \ldots, (0,e_r)$ and
\[
\widetilde v_\rho = (v_\rho, n_1 e_1 + \cdots + n_r e_r),
\]
where $v_\rho$ is the primitive generator of $\rho$, and let $\Delta$ be the fan in $N_\R \times (\R^r / (1,\ldots,1))$ whose maximal cones are projections of the facets of $\sigma$ that contain $\widetilde v_\rho$.  Then there is a natural isomorphism from $\P(\G)$ to $X(\Delta)$ taking $\P(\G|_{O_\rho})$ to the torus invariant divisor corresponding to the image of $\widetilde v_\rho$ and the codimension one projectivized subbundle corresponding to $L_j$ to the torus invariant divisor corresponding to the image of $(0, e_j)$.  
For further details on this construction, see \cite[pp. 58--59]{Oda88}.

We now apply the preceding general construction to our particular bundle $\F$.  The decomposition into line bundles induces a decomposition of  $F=L_1 \oplus \cdots \oplus L_r$ into  one-dimensional coordinate subspaces.  After relabeling we may assume that 
$F_\rho = L_1 \oplus \cdots \oplus L_{k}$ and so $\widetilde{v}_\rho
= (v_\rho,e_1 + \cdots + e_{k})$. Then  the complement $\P(\F) \smallsetminus (\overline{T \cdot Z} \cup \overline{T \cdot W})$, with its induced toric structure, corresponds to the fan $\Delta'$ in $N_\R \times (\R^r/ (1, \ldots, 1))$ obtained by removing from the fan for $\P(\F)$ the cones containing either $\widetilde{v}_\rho$ and  $(0,e_{k+1}), \ldots, (0,e_r)$ (corresponding to $\overline{T \cdot Z}$), or all of $(0,e_1), \ldots, (0,e_{k})$ (corresponding to $\overline{T \cdot W}$).

The projection of $N_{\R} \times \R^r/(1, \ldots, 1)$ onto $\R^r/(1, \ldots, 1)$ induces a map of fans from $\Delta'$ 
to the fan of the blow up of $\P_F$ along $\P_{F/F_\rho}$.  This map of fans satisfies the conditions of 
\cite[Proposition~3.2]{ACampoNeuenHausen99}, and hence the corresponding morphism of toric varieties is a geometric quotient. 
\end{proof}

We now apply the special case treated above, where the fan consists of a single ray $\rho$, to prove the general case.

\begin{proof}[Proof of Theorem~\ref{thm:polynomial}]
By the discussion following Proposition~\ref{prop:HS}, it remains to show that $\varphi|_U$ is a geometric quotient and the complement of $\varphi(U)$ has codimension 2 in $\Bl_S \P_F$.  The property of being a geometric quotient is local on the base.  For each $i$, let $U_i$ be the complement in $\Bl_S \P_F$ of the exceptional divisors over $W_j$ for $j \neq i$.  

We claim that the preimage of $U_i$ under $\varphi$ is the preimage in $U$ of the $T$-invariant affine open set $U_{\rho_i}$, under the structure map $\pi$.  Indeed, by Proposition~\ref{prop:oneray}, the rational map $\varphi$ takes the generic point of $\P(\F)|_{O_{\rho_j}}$ to the generic point of the exceptional divisor over $W_j$.  The part of $U$ that lives over $T$ maps into every $U_i$, but for the parts of $U$ over codimension one orbits of $X$, only the part over $O_{\rho_i}$ maps into $U_i$.  This proves the claim.

The union of the sets $U_i$ cover all but a codimension 2 locus in $\Bl_S \P_\F$, so it only remains to show that the restriction of $\varphi$ to the preimage of $U_i$ is a geometric quotient.  Again, this follows from the local computation in Proposition~\ref{prop:oneray}, because $U_i$ is just the complement of the codimension 2 loci given by the strict transforms of the $W_j$ for $j \neq i$ in $\Bl_{W_i} \P_F$, and the restriction of $\varphi$ to $\varphi^{-1}(U_i)$ is the restriction of the geometric quotient onto $\Bl_{W_i} \P_F$ described in Proposition~\ref{prop:oneray}.
\end{proof}

\begin{proof}[Proof of Theorem~\ref{thm:CoxRing}]
	Let  $S$ be a subset of $s$  very general points of $\P_F \cong \P^{r-1}$ such that $s\geq r+2 + \frac{4}{r-2}$. 
	Then 
$\cR(\Bl_S \P_F)$ is not finitely generated, see \cite{Mukai04}. 
Let $X$ be any smooth toric variety of dimension at most $\dim(\P_F)$ 
with at least $s$ rays. Then 
by Remark~\ref{rem:Compatibility} there exists a vector bundle 
$\F$ on $X$  satisfying (\ref{eq:standardform}) such that the nonzero 
$F_i$ correspond to the points $p_i$ in $S$. 
The conclusion then 
follows from Theorem~\ref{thm:polynomial}. 
\end{proof}

\begin{remark}
	Note that the isomorphism of Theorem~\ref{thm:polynomial}
	is not  an isomorphism of graded rings. However, the 
	pull back by the quotient map $\varphi$ constructed in the 
	proof of Theorem~\ref{thm:polynomial} induces a 
	group homomorphism $\varphi^* \colon \mathrm{Cl}(\Bl_S(\P_F) 
	\to \mathrm{Cl}(\P(\F))$. Letting $\deg(x_i) = [\pi^{-1}(D_{\rho_i})]$ 
	for $s+1\leq i\leq n$, we obtain a $\mathrm{Cl}(\P(\F))$-grading of the polynomial ring in $n-s$ variables 
	over the Cox ring of $\Bl_S(\P_F)$  such that the isomorphism of 
	Theorem~\ref{thm:polynomial} is graded. 
\end{remark}

%\begin{remark}\label{rem:general}
%\end{remark}

\begin{cor}  \label{cor:MDS}
Suppose $\F$ is given by filtrations satisfying \emph{(\ref{eq:standardform})} with the $F_i$ being hyperplanes in general position.  If $\frac{1}{r} + \frac{1}{n-r} > \frac{1}{2}$ then $\P(\F)$ is a Mori dream space.
\end{cor}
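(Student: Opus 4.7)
The plan is to reduce the corollary to a theorem of Mukai via Theorem~\ref{thm:polynomial}.

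First, since each $F_i$ is a nonzero hyperplane, the integer $s$ from the setup of Theorem~\ref{thm:polynomial} equals $n$, so the polynomial ring in $n - s = 0$ variables collapses. Theorem~\ref{thm:polynomial} therefore gives an isomorphism
\[
\cR(\P(\F)) \cong \cR(\Bl_S \P_F),
\]
where $S = \{p_1, \ldots, p_n\}$ with $p_i = \P_{F/F_i}$. Because the hyperplanes $F_i$ are in general position in $F$, the dual points $p_i$ lie in general linear position in $\P_F \cong \P^{r-1}$. (I would also note that the Klyachko compatibility condition needed to define $\F$ holds automatically by Remark~\ref{rem:Compatibility}, using the smoothness of $X(\Sigma)$ and that $r \geq d$ is forced by the existence of hyperplanes meeting transversely along each cone.)

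Second, I would invoke Mukai's theorem \cite{Mukai04}: the blowup of $\P^{r-1}$ at $n$ points in general linear position has a finitely generated Cox ring precisely when the Weyl group of type $T_{2,r,n-r}$ is finite, which occurs if and only if $\frac{1}{r} + \frac{1}{n-r} > \frac{1}{2}$. In this regime the finitely many orbits of $(-1)$-classes furnish an explicit finite generating set of $\cR(\Bl_S \P_F)$, so the latter ring is finitely generated. Combining this with the isomorphism above, $\cR(\P(\F))$ is finitely generated. Since $\P(\F)$ is smooth and projective with $h^1(\O) = 0$ (being a projectivization of a vector bundle over a smooth complete toric variety), finite generation of the Cox ring is equivalent to $\P(\F)$ being a Mori dream space by \cite{HuKeel00}.

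The one subtle point I expect to be the main obstacle is verifying that Mukai's finite-generation conclusion is valid under ``general'' rather than ``very general'' position of the $p_i$, since the negative results of the paper (notably Theorem~\ref{thm:CoxRing}) are stated for very general configurations. This passes through because the root system $T_{2,r,n-r}$ and hence the finite list of generating $(-1)$-classes depend only on the combinatorial pair $(r,n)$, not on the particular points; one only needs that these classes be represented by effective divisors on $\Bl_S \P_F$, which holds for any configuration in general linear position by the standard theory of generalized del Pezzo blowups.
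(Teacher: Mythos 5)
Your proposal is correct and follows the paper's own route: Theorem~\ref{thm:polynomial} identifies $\cR(\P(\F))$ with a polynomial ring over $\cR(\Bl_S \P_F)$ (in $n-s$ variables, where $s$ is the number of nonzero $F_j$; the paper does not assume $s=n$), and the numerical condition $\frac{1}{r}+\frac{1}{n-r}>\frac{1}{2}$ then guarantees that the Cox ring of the blowup of $\P^{r-1}$ at points in general position is finitely generated. The only correction is one of attribution: the finite-generation statement you invoke is \cite[Theorem~1.3]{CastravetTevelev06}, not \cite{Mukai04}, which covers the opposite (non--finitely-generated) regime and is what the paper uses in the proof of Theorem~\ref{thm:CoxRing}.
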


\begin{proof}
Suppose $\frac{1}{r} + \frac{1}{n-r} > \frac{1}{2}$. Then the blow up of $\P^{r-1}$ at $n$ points in general position is a Mori dream space \cite[Theorem~1.3]{CastravetTevelev06}, and then so is the blow up $\Bl_s \P^{r-1}$ of $\P^{r-1}$ at $s$ points in general position, where $s$ is the number of rays $\rho_j$ such that $F_j$ is nonzero. The corollary then follows immediately from Theorem~\ref{thm:polynomial}, which says that $\cR(\P(\F))$ is finitely generated over $\cR(\Bl_s \P^{r-1})$.
\end{proof}

\noindent If the points $p_1, \ldots, p_s$ are not in general position then $\P(\F)$ can be a Mori dream space, even when $\frac{1}{r} + \frac{1}{n-r} \leq \frac{1}{2}$.  For instance, if $p_1, \ldots, p_s$ are collinear then $\Bl_S \P_F$ is a rational variety with a torus action with orbits of codimension one, and hence is a Mori dream space \cite{EKW04, HausenSuess10, Ottem10}.  Also, if $p_1, \ldots, p_s$ lie on a rational normal curve, then $\Bl_S \P_F$ is a Mori dream space \cite[Theorem~1.2]{CastravetTevelev06}.

We conclude this section with the observation that the Cox ring of the blowup of projective space along an arbitrary arrangement of linear subspaces can be realized as the Cox ring of a projectivized toric vector bundle.

\begin{cor} \label{cor:arbitrary}
Let $S$ be an arbitrary arrangement of $n$ linear subspaces of codimension at least 2 in $\P_F$ and let $\Sigma$ be a fan with $n$ rays that defines a smooth projective toric surface.  Then there is a toric vector bundle $\F$ on $X(\Sigma)$ such that $\cR(\P(\F)) \cong \cR(\Bl_S \P_F)$.
\end{cor}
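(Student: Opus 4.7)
For each $L_i \in S$, let $F_i \subset F$ denote the unique subspace with $L_i = \P_{F/F_i}$; since $L_i$ has codimension at least two in $\P_F$, we have $\dim F_i \geq 2$, so the collection $\{F_i\}_{i=1}^n$ fits the standard form (\ref{eq:standardform}), and distinct subspaces $L_i$ correspond to distinct nonzero $F_i$. Fixing any bijection between the elements of $S$ and the rays of $\Sigma$, the plan is to attach to each ray $\rho_i$ the filtration (\ref{eq:standardform}) determined by $F_i$, verify that the resulting family satisfies Klyachko's compatibility condition so that it defines a toric vector bundle $\F$ on $X(\Sigma)$, and then invoke Theorem~\ref{thm:polynomial} to obtain $\cR(\P(\F)) \cong \cR(\Bl_S \P_F)$.

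The main step is the verification of Klyachko's compatibility condition at each maximal cone. Since $X(\Sigma)$ is a smooth toric surface, every maximal cone has the form $\sigma = \langle \rho_i, \rho_j \rangle$ with $\{v_i, v_j\}$ forming a $\Z$-basis of $N$. For such $\sigma$, choose a decomposition
\[
F = V_{00} \oplus V_{01} \oplus V_{10} \oplus V_{11},
\]
with $V_{11} = F_i \cap F_j$, $V_{11} \oplus V_{10} = F_i$, $V_{11} \oplus V_{01} = F_j$, and $V_{00}$ a complement of $F_i + F_j$ in $F$. After refining each $V_{ab}$ into a direct sum of one-dimensional subspaces and assigning to every line $L \subset V_{ab}$ the unique character $u_L \in M$ with $\langle u_L, v_i \rangle = a$ and $\langle u_L, v_j \rangle = b$ (which exists and is unique because $\{v_i, v_j\}$ is a $\Z$-basis of $N$), one checks directly that the prescription $F^{\rho_i}(k) = \bigoplus_{\langle u_L, v_i \rangle \geq k} L$ recovers the filtration (\ref{eq:standardform}) determined by $F_i$, and similarly for $\rho_j$. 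Hence Klyachko's compatibility condition holds at $\sigma$, and by Klyachko's equivalence of categories the family $\{F^{\rho_i}\}$ comes from a toric vector bundle $\F$ on $X(\Sigma)$.

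Since the $F_i$ are all nonzero and distinct, in the notation of Theorem~\ref{thm:polynomial} we have $s = n$, so the theorem yields that $\cR(\P(\F))$ is a polynomial ring in $n - s = 0$ variables over $\cR(\Bl_S \P_F)$, giving the desired isomorphism. The main obstacle is that the $F_i$ may have arbitrary codimension and lie in an arbitrary configuration, so the transversality criterion of Remark~\ref{rem:Compatibility} for hyperplane filtrations does not apply directly; the two-dimensionality of $X(\Sigma)$ is essential here, because each maximal cone involves only two rays and the four-piece decomposition above accommodates any pair of subspaces with no further hypothesis on the arrangement.
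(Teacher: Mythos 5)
Your proposal is correct and follows the same overall strategy as the paper: attach the filtration (\ref{eq:standardform}) determined by $F_i$ to the ray $\rho_i$, check Klyachko's compatibility condition, and apply Theorem~\ref{thm:polynomial} with $s=n$. The one place where you diverge is the compatibility check: the paper simply cites Klyachko's observation (\cite[Example~2.3.4]{Klyachko90}) that \emph{any} collection of filtrations indexed by the rays of a smooth surface fan is compatible, whereas you verify this directly for two-step filtrations via the decomposition $F = V_{00}\oplus V_{01}\oplus V_{10}\oplus V_{11}$ adapted to the pair $F_i,\ F_j$ at each maximal cone, with characters determined by the dual basis of $\{v_i,v_j\}$. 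Your verification is sound (the sum $V_{11}\oplus V_{10}\oplus V_{01}$ is direct and equals $F_i+F_j$, and the prescription visibly reproduces each filtration), and it has the merit of being self-contained, at the cost of only covering the special filtrations needed here rather than Klyachko's general statement. You are also right to flag that Remark~\ref{rem:Compatibility} is irrelevant in this setting: no transversality or general-position hypothesis is needed precisely because each maximal cone of a surface fan involves only two rays.
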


\begin{proof}
An arbitrary collection of filtrations of $F$ indexed by the rays of $\Sigma$ satisfies Klyachko's compatibility condition, because $X(\Sigma)$ is a smooth surface \cite[Example~2.3.4]{Klyachko90}.  Therefore, if $S = \{ \P_{F/F_1}, \ldots, \P_{F/F_n}\}$ then the filtrations
\[
F^{\rho_j}(k) = \left \{ \begin{array}{ll} F & \mbox{ for } k \leq 0, \\
								  F_j  & \mbox{ for } k = 1, \\
								  0 & \mbox{ for } k > 1,
								  \end{array} \right.
\]
satisfy (\ref{eq:standardform}) and determine a toric vector bundle on $X(\Sigma)$.  By Theorem~\ref{thm:polynomial}, the Cox ring $\cR(\P(\F))$ is isomorphic to $\cR(\Bl_S \P_F)$.
\end{proof}

\begin{example} \label{ex:Kapranov}
Let $S$ be the arrangement of all linear subspaces of codimension at least 2 spanned by subsets of a set of $r+1$ points in general position in $\k^r$.  Then Kapranov's construction \cite{Kapranov93} shows that $\Bl_S \P^{r-1}$ is isomorphic to the Deligne-Mumford moduli space $\overline{M}_{0,r+2}$.  Therefore, there is a toric vector bundle $\F$ on a smooth projective toric surface such that $\cR(\P(\F))$ is isomorphic to the Cox ring of $\overline{M}_{0,n}$.  It is not known whether this ring is finitely generated.
\end{example}

\section{Cotangent bundles}

In the previous section, we gave a presentation of Cox rings of some projectivized toric vector bundles as polynomial rings over Cox rings of certain blowups of projective space, and used this to give examples where Cox rings of projectivized toric vector bundles are finitely generated, where they are not finitely generated, and where they are isomorphic to the Cox ring of $\overline{M}_{0,n}$.  We now apply the same methods and results to study Cox rings of projectivized cotangent bundles of smooth projective toric varieties.

  By \cite{Klyachko90} the filtrations of cotangent bundles have the following form.
\begin{equation*}
\Omega^{\rho_j}(k) = \left \{ \begin{array}{ll} M \otimes \k & \mbox{ for } k \leq -1 \\
								  v_j^\perp  & \mbox{ for } k = 0, \\
								  0 & \mbox{ for } k > 0,
				  \end{array} \right.
\end{equation*}
If the fan does not contain any pair of opposite rays, then the filtrations for the twist of the cotangent bundle by the anticanonical line bundle satisfy (\ref{eq:standardform}). Since twisting by a line bundle does not change the projectivization, Theorem~\ref{thm:polynomial} shows that the Cox ring of the projectivized cotangent bundle is isomorphic to the Cox ring of $\Bl_S(\P_{M\otimes {\k}})$, where $S$ is the set of points $p_j$ corresponding to $v_j^{\perp}$.  The case where the fan does contain opposite rays is treated in Section~\ref{sec:repetitions}.

\begin{example} \label{ex:projcotangent}
For the cotangent bundle on projective space $\P^r$, the corresponding set $S$ consists of $r+1$ points in linearly general position in $\P^{r-1}$.    Then $\cR(\P(\Omega^1_{\P^r}))$ is identified with $\cR(\Bl_S \P^{r-1})$, which is isomorphic to the coordinate ring of the Grassmannian $\Grass(2,r+2)$ in its Pl\"ucker embedding; see \cite[Remark~3.9]{CastravetTevelev06}.  
\end{example}

\noindent Example~\ref{ex:projcotangent} is a special case of the Cox rings of wonderful varieties studied by Brion \cite{Brion07}.

We now give an example of a smooth projective toric threefold whose projectivized cotangent bundle is not a Mori dream space.  The construction uses a particularly nice configuration of nine points in $\Z^3$, due to Totaro, such that, for any field $\k$ of characteristic not two or three, the blowup of $\P^2(\k)$ at the corresponding nine $\k$-points is not a Mori dream space.  The proof of Theorem~\ref{thm:CotangentBundle} will be by induction on dimension, starting from this example.

\begin{example} \label{ex:Dim3}
In this example, we work over a field $\k$ of characteristic not two or three.  

The vectors
\[
\begin{array}{llll}
v_1=(0,0,1), & v_2=(0,1,0), & v_3=(1,1,1), & v_4=(-1,-2,-2)
\end{array}
\]
span the four rays of a unique complete fan $\Sigma_4$ in $\mathbf{R}^3$.  The corresponding toric variety $X(\Sigma_4)$ is isomorphic to $\P^3$.  Consider the vectors
\[
\begin{array}{llll}
v_5 = (1,1,2), & v_6 = (0,-1,1), & v_7 = (1,0,1), & v_8 = (1,-1,1), \\
 v_9 = (-1,-2,-1), & v_{10} = (-1,-1,0), & v_{11} = (-1,-1,1), & v_{12} = (-1,0,1), \\
 v_{13} = (-1,1,1), & v_{14} = (0,1,1),&
\end{array}
\]
and let $\Sigma_i$ be the stellar subdivision of $\Sigma_{i-1}$ along the ray spanned by $v_i$, for $5 \leq i \leq 14$.  For each such $i$, the vector $v_i$ is the sum of two or three of the $v_j$ that span a cone in $\Sigma_{i-1}$.  Therefore, the toric variety $X(\Sigma_i)$ is the blowup of $X(\Sigma_{i-1})$ at either a point or a torus invariant smooth rational curve.  In particular, if we set $\Sigma = \Sigma_{14}$, then the corresponding toric variety $X(\Sigma)$ is smooth and projective.  The twist $\F$ of the cotangent bundle on $X(\Sigma)$ by the anticanonical bundle $\O(D_{\rho_1} + \cdots + D_{\rho_{14}})$ is given by the vector space $F = \k^3$ with filtrations
\[
F^{\rho_i}(j) = \left \{ \begin{array}{ll} \k^3 & \mbox{ for } j \leq 0, \\
								  v_i^\perp  & \mbox{ for } j = 1, \\
								  0 & \mbox{ for } j > 1,
								  \end{array} \right.
\]
Since the characteristic of $\k$ is not two or three, the points $v_i^\perp$ are all distinct in $\P^2_\k$, and hence the filtrations satisfy (\ref{eq:standardform}).  Twisting by a line bundle does not change the projectivization, so Theorem~\ref{thm:polynomial} says that the Cox ring of the projectivized cotangent bundle of $X(\Sigma)$ is isomorphic to the Cox ring of $\Bl_S \P^2_{\k}$, where $S = \{ v_1^\perp, \ldots, v_{14}^\perp \}$.  The subset
\[
S' = \{ v_1^\perp, v_3^\perp, v_6^\perp, v_7^\perp, v_8^\perp, v_{11}^\perp, v_{12}^\perp, v_{13}^\perp, v_{14}^\perp \}
\]
is the complete intersection of two smooth cubics, and the Cox ring of $\Bl_{S'} \P^2_{\k}$ is not finitely generated \cite[Theorem~2.1,  Corollary~5.1 and Theorem~5.2]{Totaro08}.  It follows that $\Bl_S \P^2_{\k}$ is not a Mori dream space, and neither is the projectivized cotangent bundle of $X(\Sigma)$.
\end{example}

We use the following lemma on Cox rings of blowups of projective space at finitely many points contained in a hyperplane in the proof of Theorem~\ref{thm:CotangentBundle}.  Instances of this basic fact have appeared, including in \cite[Example~1.8]{HassettTschinkel04}.  However, lacking a suitable reference, we give a proof.

\begin{lem} \label{lem:hyperplane}
Let $S$ be a finite set of points contained in a hyperplane $H$ in $\P^d$, and assume $d > 2$.  Then the Cox ring of $\Bl_S \P^d$ is isomorphic to a polynomial ring in one variable over the Cox ring of $\Bl_S H$.
\end{lem}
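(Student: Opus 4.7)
The plan is to apply Proposition~\ref{prop:HS} to a suitable $\mathbf{G}_m$-action on $Y := \Bl_S \P^d$. Choose a point $p \in \P^d \smallsetminus H$, and let $\mathbf{G}_m$ act on $\P^d$ fixing $H$ pointwise with $p$ as its unique remaining fixed point: in coordinates where $p = [1:0:\cdots:0]$ and $H = \{x_0 = 0\}$, this is the action $t\cdot[x_0:x_1:\cdots:x_d] = [tx_0:x_1:\cdots:x_d]$. The linear projection from $p$ identifies $\P^d \smallsetminus (H \cup \{p\})$ with the standard $\mathbf{G}_m$-torsor over $H \cong \P^{d-1}$, and since $S \subset H$ lies in the fixed locus, the action lifts uniquely to $Y$. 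Composing the projection from $p$ with the blowup morphism $\Bl_S H \to H$ gives a $\mathbf{G}_m$-invariant dominant rational map $\varphi\colon Y \dashrightarrow \Bl_S H$ that I intend to show is a geometric quotient over an appropriate open set.

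First I would identify the fixed locus on $Y$. Since $S$ consists of smooth points of $H$, the strict transform $\widetilde{H}$ is canonically isomorphic to $\Bl_S H$, and it is fixed pointwise. A direct computation in standard blowup charts shows that the remaining fixed points on $Y$ are exactly $p$ and one isolated fixed point $q_i$ in the interior of each exceptional divisor $E_i$, corresponding to the direction normal to $H$ at $p_i$. Let $U'$ be the complement in $Y$ of the finite set $\{p, q_1, \ldots, q_s\}$. Since $d \geq 3$ this complement has codimension $d \geq 3$, so $\cR(U') = \cR(Y)$. In $U'$ the only irreducible divisor with positive dimensional generic stabilizer is $\widetilde{H} \cap U'$, and the same local computation shows that $\mathbf{G}_m$ acts freely on $U := U' \smallsetminus \widetilde{H}$.

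It remains to verify that $\varphi|_U$ is a regular geometric quotient whose image misses only a subset of codimension at least two in $\Bl_S H$. Over $H \smallsetminus S$ this is the classical presentation of projective space as a $\mathbf{G}_m$-quotient of affine space. Near each $E_i$, I would check in standard blowup charts, in the same spirit as the argument in Proposition~\ref{prop:oneray}, that the rational projection from $p$ becomes regular after blowing up, that it is constant on $\mathbf{G}_m$-orbits, and that it presents each affine piece of $U$ as a trivial $\mathbf{G}_m$-bundle over the corresponding chart of $\Bl_S H$, with the only missing point in that chart being the image of $q_i$, which has codimension $d-1 \geq 2$. With these four properties in hand, Proposition~\ref{prop:HS} applied to $U'$ with the single divisor $\widetilde{H} \cap U'$ yields the desired isomorphism of $\cR(Y)$ with a polynomial ring in one variable over $\cR(\Bl_S H)$.

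I expect the main technical step to be the chart-by-chart verification near the exceptional divisors, where one must reconcile the rational projection from $p$ with the blowup $\Bl_S H \to H$ and identify the $\mathbf{G}_m$-quotient of each blowup chart of $Y$ with the corresponding chart of $\Bl_S H$. The hypothesis $d > 2$ is used precisely to ensure that both the removed isolated fixed points in $Y$ and the missing image points in $\Bl_S H$ have codimension at least two, so that the Cox rings and class groups are preserved.
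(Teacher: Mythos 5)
Your proposal follows essentially the same route as the paper: the paper's proof uses exactly this $\mathbf{G}_m$-action scaling the coordinate that cuts out $H$, observes that the strict transform of $H$ is the only divisor in the fixed locus, and concludes by Proposition~\ref{prop:HS}. The one detail to adjust in your chart-by-chart verification is that the induced rational map to $\Bl_S H$ fails to be regular not only at the isolated fixed points $q_i$ but along the entire strict transform of each line joining $p$ to a point of $S$ (on such a line the projection from $p$ is constant, equal to a blown-up point, so no tangent direction in $H$ is singled out); since these curves have codimension $d-1 \geq 2$, excising them as well affects neither the Cox ring nor the rest of the argument.
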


\begin{proof}
Choose coordinates on $\P^d$ so that $H$ is a coordinate hyperplane, and let $\mathbf G_m$ act by scaling on the coordinate that cuts out $H$.  The action of $\mathbf G_m$ lifts to an action on $\Bl_S \P^d$, and we let $Y$ be the locus of fixed points of this action.  Then $\mathbf G_m$ acts freely on $\Bl_S \P^d \smallsetminus Y$, with quotient $\Bl_S H$.  The strict transform of $H$ is the only divisor contained in $Y$, so the lemma follows by applying Proposition~\ref{prop:HS}.
\end{proof}

\begin{proof}[Proof of Theorem~\ref{thm:CotangentBundle}]
Let $\k$ be a field of characteristic not two or three.  We must show that, for each dimension $d \geq 3$, there is a fan $\Sigma$ in $\R^d$ such that
\begin{enumerate}
\item The toric variety $X(\Sigma)$ is smooth and projective.
\item The hyperplanes in ${\k}^d$ perpendicular to the primitive generators of the rays of $\Sigma$ are distinct.
\item The Cox ring of $\Bl_S \P^{d-1}_{\k}$ is not finitely generated, where $S$ is the set of points corresponding to these hyperplanes.
\end{enumerate}
For $d = 3$, we have Example~\ref{ex:Dim3}, and we proceed by induction.

Suppose $\Sigma$ is a fan in $\R^d$ satisfying (1), (2), and (3).  Embed $\R^d$ as the last coordinate hyperplane in $\R^{d+1}$, and let $\Sigma'$ be the fan in $\R^{d+1}$ whose maximal cones are spanned by a maximal cone of $\Sigma$ together with either $(1, \ldots, 1)$ or $(1, \ldots, 1, -1)$.  The corresponding toric variety $X(\Sigma')$ is smooth and projective and, since the characteristic of $\k$ is not two, the hyperplanes in $\k^{d+1}$ perpendicular to the rays of $\Sigma'$ are distinct.  It remains to show that $\Sigma'$ satisfies (3).  Let $S'$ be the corresponding set of points in $\P^{d}_{\k}$.  Now $S'$ contains the subset $S$ of points corresponding to rays of $\Sigma$, and $S$ is contained in a hyperplane $H$.  By hypothesis, the Cox ring of $\Bl_S H$ is not finitely generated.  By Lemma~
\ref{lem:hyperplane}, it follows that $\Bl_{S} \P^d_{\k}$ is not a Mori dream space, and neither is $\Bl_{S'} \P^d$.   The theorem follows, since the Cox ring of the projectivized cotangent bundle of $X(\Sigma)$ is isomorphic to the Cox ring of $\Bl_{S'} \P^d_{\k}$, by Theorem~\ref{thm:polynomial}.
\end{proof}

\section{Pseudoeffective cones} \label{sec:EffCones}

In this section we prove Theorem \ref{thm:EffCones}. The techniques 
of the proof are independent 
from those of Section \ref{sec:CoxRings}. 

The pseudoeffective cone of a projective variety $X$ is the closure of the cone spanned by the classes of all effective divisors in the space of numerical equivalence classes of divisors $N^{1}(X)_\R = N^{1}(X) \otimes_\Z \R$. For projectivized toric vector bundles and for blowups of projective spaces at finite sets of points, linear equivalence and numerical equivalence coincide and then we identify $N^{1}(X)_\R$ and $\Cl(X)_\R = \Cl(X) \otimes_\Z \R$. 
% The following Lemma is well-known. The statement can be obtained 
% by applying the Borel Fixed point theorem to the closure 
% of the torus orbit through 
% the point corresponding to a divisor in the Chow variety. However, 
% since our proof is very elemenary, we still include it here. 

% \begin{lem} \label{lem:generators}
% Every effective divisor on $\P(\F)$ is linearly equivalent to a torus invariant effective divisor.
% \end{lem}

% \begin{proof}
% Let $D$ be an effective divisor in $\P(\F)$, and let $\gamma_1, \ldots, \gamma_d$ be a basis for the lattice of one parameter subgroups of $T$.  Set $D_0 = D$, and let $D_i$ be the limit as $t$ goes to zero of $\gamma_i(t) D_{i-1}$, for $1 \leq i \leq d$.  Then $D_i$ is effective, linearly equivalent to $D$, and invariant under the subgroup of $T$ generated by $\gamma_1, \ldots, \gamma_i$.  In particular, $D_d$ is an effective $T$-invariant divisor linearly equivalent to $D$.
% \end{proof}

Now we consider again a toric vector bundle $\F$ on a complete toric variety $X(\Sigma)$. Any effective divisor $D$ on $\P(\F)$ is linearly equivalent to a torus
invariant effective divisor; this can be seen by applying the Borel fixed-point
theorem to the torus orbit closure of the point $[D]$ in the Chow
variety of effective codimension 1 cycles on $\P(\F)$. So the pseudoeffective cone of $\P(\F)$ is the closure of the cone generated by classes of torus invariant prime divisors.  Note that every torus invariant prime divisor in $\P(\F)$ is either the preimage of a torus invariant prime divisor in $X(\Sigma)$ or surjects onto $X(\Sigma)$.  If a torus invariant prime divisor surjects onto $X(\Sigma)$ then it must be the closure of the torus orbit of its intersection with the fiber over the identity.  We write $\D_H$ for the closure of the torus orbit of a hypersurface $H$ in $\P_F$.  One key step toward understanding the pseudoeffective cone of $\P(\F)$ is to express the class of each such $\D_H$ as a linear combination of $\O(1)$ and the $\pi^{-1} (D_{\rho_i})$.  Such expressions may be somewhat complicated in general, but are relatively simple for bundles given by filtrations of the special form discussed in Section~\ref{sec:prelim}.  

Suppose the filtrations $\{F^{\rho_i}(j)\}$  associated 
to the vector bundle $\F$ satisfy condition (\ref{eq:standardform}) of Section~\ref{sec:prelim} and all proper subspaces $F^{\rho_i}(j) \subset F$ are distinct hyperplanes.

\begin{lem} \label{lem:sections}
Restriction to the fiber $\P_F$ gives an isomorphism from the space of $T$-invariant global sections of $\O(m)$ on $\P(\F)$ to $\Sym^m(F)$.
\end{lem}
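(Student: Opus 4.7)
The plan is to identify $T$-invariant sections of $\O(m)$ on $\P(\F)$ with $T$-invariant global sections of $\Sym^m \F$ on $X(\Sigma)$, and then to compute the latter using the Klyachko filtrations of the symmetric power bundle.

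First I would use the canonical identification $H^0(\P(\F),\O(m)) = H^0(X(\Sigma), \Sym^m \F)$ and note that this identification is equivariant, so it suffices to work with the $T$-invariant (i.e.\ weight $0$) global sections of $\Sym^m \F$. Recall from Section~\ref{sec:prelim} that evaluation at $1_T$ injects the space of isotypical sections $H^0(U_\sigma, \Sym^m \F)_0$ into $\Sym^m F$, with image
\[
(\Sym^m F)^\sigma_0 = \bigcap_{\rho_j \preceq \sigma} (\Sym^m F)^{\rho_j}(0).
\]
Taking the intersection over all maximal cones $\sigma$ of $\Sigma$ yields
\[
H^0(X(\Sigma), \Sym^m \F)_0 \xrightarrow{\sim} \bigcap_{j=1}^n (\Sym^m F)^{\rho_j}(0),
\]
where the inclusion is via evaluation at $1_T$, which is exactly restriction to the fiber $\P_F$ over $1_T$ (under the identification $H^0(\P_F, \O(m)|_{\P_F}) = \Sym^m F$).

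Next I would compute the filtrations on $\Sym^m F$ induced by the filtrations on $F$. By the general construction of Klyachko filtrations on tensorial operations, $(\Sym^m F)^{\rho_j}(k)$ is the span of products $f_1 \cdots f_m$ with $f_\ell \in F^{\rho_j}(k_\ell)$ and $k_1 + \cdots + k_m \geq k$. In particular
\[
(\Sym^m F)^{\rho_j}(0) \supseteq \Sym^m\bigl(F^{\rho_j}(0)\bigr).
\]
Condition~(\ref{eq:standardform}) forces $F^{\rho_j}(0) = F$ for every ray $\rho_j$, so $(\Sym^m F)^{\rho_j}(0) = \Sym^m F$ for every $j$. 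Therefore the intersection displayed above equals $\Sym^m F$, and the restriction map is an isomorphism.

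The main obstacle, if any, is simply knowing the formula for the Klyachko filtration on $\Sym^m \F$ in terms of that of $\F$; once this is in hand, the condition $F^{\rho_j}(0) = F$ in (\ref{eq:standardform}) makes every constraint on weight $0$ sections trivial, and the result follows immediately. (If one prefers to avoid invoking the filtration on the symmetric power, the same conclusion can be reached by a direct argument: injectivity is clear since any $T$-invariant section is determined by its restriction to the fiber over $1_T$, and surjectivity follows by extending $T$-invariantly over the dense torus, then checking that the resulting rational section extends across each $D_{\rho_j}$ using $F^{\rho_j}(0) = F$ and the valuative criterion.)
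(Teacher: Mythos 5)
Your proposal is correct and follows essentially the same route as the paper: identify $H^0(\P(\F),\O(m))$ with $H^0(X(\Sigma),\Sym^m\F)$, describe the weight-zero sections via the Klyachko filtrations of $\Sym^m\F$ evaluated at $0$, and observe that condition~(\ref{eq:standardform}) forces $(\Sym^m F)^{\rho_j}(0)=\Sym^m F$ for every ray. The only cosmetic difference is that the paper writes out the full filtration of $\Sym^m\F$ explicitly, whereas you only need (and only use) its value at $k=0$.
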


\begin{proof}
For any bundle $\F$, global sections of $\O(m)$ on $\P(\F)$ are naturally identified with global sections of $\Sym^m \F$.  Now, $\Sym^m \F$ is a toric vector bundle, with fiber $\Sym^m F$ over $1_T$, and since the filtrations defining $\F$ satisfy (\ref{eq:standardform}), the filtrations defining $\Sym^m \F$ are given by
\[
\Sym^m F^{\rho_i}(j) = \left \{ \begin{array}{ll} \Sym^m F & \mbox{ for } j \leq 0, \\
								    \operatorname{Image}\left( \Sym^j F_i \otimes \Sym^{m-j} F \rightarrow \Sym^m F \right) & \mbox{ for } 1 \leq j \leq m, \\
								  0 & \mbox{ for } j > m.
								  \end{array} \right.
\]
The space of $T$-invariant sections of $\Sym^m \F$ is the intersection of all of these filtrations evaluated at zero, and the lemma follows, because $\Sym^m F^{\rho_i}(0)$ is $\Sym^m F$, for every ray $\rho_i$.
\end{proof}

Let $p_i$ be the point in $\P_F$ corresponding to the one-dimensional quotient $F/F_i$, whenever $F_i$ is nonzero.  We write $D_j$ for the $T$-invariant prime divisor $\pi^{-1}(D_{\rho_j})$ in $\P(\F)$.  

\begin{lem} \label{lem:hypersurfaces}
Let $H$ be a hypersurface of degree $m$ in $\P_F$, and let $m_i$ be the multiplicity of $H$ at $p_i$.  Then there is a linear equivalence 
\[
\D_H \, \sim \ \O(m) - \sum_i m_i (\pi^{-1} (D_{\rho_i})),
\]
where the sum is over those $i$ such that $F_i$ is nonzero.
\end{lem}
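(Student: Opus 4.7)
The plan is to identify the divisor on $\P(\F)$ of the $T$-invariant global section of $\O(m)$ corresponding to $H$. By Lemma~\ref{lem:sections}, a defining equation $g \in \Sym^m F$ of $H$ lifts to a unique $T$-invariant section $s_H \in H^0(\P(\F), \O(m))^T$. Its divisor $\divisor(s_H)$ is an effective $T$-invariant divisor in the class of $\O(m)$, hence a nonnegative integer combination of $T$-invariant prime divisors of $\P(\F)$. These come in two families: the vertical divisors $\pi^{-1}(D_{\rho_i})$ pulled back from the base, and the horizontal divisors $\D_{H'}$ obtained as closures of $T$-orbits of irreducible hypersurfaces $H'$ in the fiber $\P_F$ over $1_T$. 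Restricting to $\P_F$, the section $s_H$ cuts out $H$, while each $\pi^{-1}(D_{\rho_i})$ is disjoint from this fiber, so the horizontal part of $\divisor(s_H)$ is precisely $\D_H$. Hence
\[
\divisor(s_H) \;=\; \D_H + \sum_i b_i\, \pi^{-1}(D_{\rho_i})
\]
for some integers $b_i \geq 0$, and the lemma reduces to showing $b_i = m_i$ when $F_i \neq 0$ and $b_i = 0$ when $F_i = 0$.

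To compute $b_i$, I would use the following general fact about toric vector bundles: for a toric vector bundle $\G$ on $X(\Sigma)$, the order of vanishing along $D_{\rho_i}$ of the weight-zero $T$-invariant global section corresponding to a nonzero $f \in G$ equals $\max\{k : f \in G^{\rho_i}(k)\}$. One verifies this by choosing a maximal cone $\sigma$ with $\rho_i \preceq \sigma$, using Klyachko's compatibility condition to decompose $\G|_{U_\sigma} = \L_1 \oplus \cdots \oplus \L_r$ into toric line bundles with characters $u_1, \ldots, u_r$, and computing the vanishing order of the induced section summand by summand: it is the minimum of $\langle u_j, v_i \rangle$ over those $j$ for which $f$ has a nonzero component in $\L_j$, which coincides with the largest $k$ such that $f \in G^{\rho_i}(k)$. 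Applied to $\G = \Sym^m \F$ with $f = g$, together with the explicit filtrations on $\Sym^m \F$ recorded in the proof of Lemma~\ref{lem:sections}, this yields
\[
b_i \;=\; \max\bigl\{k : g \in \operatorname{Image}\bigl(\Sym^k F_i \otimes \Sym^{m-k} F \to \Sym^m F\bigr)\bigr\}.
\]

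It remains to recognize this filtration level as the multiplicity $m_i$. Since $F_i$ is precisely the space of linear forms on $\P_F$ vanishing at the point $p_i$ corresponding to the quotient $F \twoheadrightarrow F/F_i$, the homogeneous ideal of $p_i$ in $\Sym F$ is generated by $F_i$, and its $k$-th power in degree $m$ is exactly the image of $\Sym^k F_i \otimes \Sym^{m-k} F$ in $\Sym^m F$. Therefore the largest $k$ for which $g$ lies in this image is precisely the multiplicity of $H$ at $p_i$, giving $b_i = m_i$. For rays with $F_i = 0$ the filtration $\Sym^m F^{\rho_i}(j)$ equals $\Sym^m F$ for $j \leq 0$ and vanishes for $j > 0$, so $b_i = 0$, consistent with the sum being taken over rays with nonzero $F_i$. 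Rearranging the resulting identity modulo linear equivalence then yields the asserted equivalence. The main obstacle is verifying the general vanishing-order formula for $T$-invariant sections of toric vector bundles; this is a purely local analysis on $U_\sigma$ that reduces to the classical computation for toric line bundles once the splitting from the compatibility condition has been fixed.
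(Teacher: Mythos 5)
Your proof is correct and follows essentially the same route as the paper's: both identify the divisor of the $T$-invariant section of $\O(m)$ attached to a defining equation of $H$, observe that its horizontal part is $\D_H$, and compute the coefficient of $\pi^{-1}(D_{\rho_i})$ as the largest $k$ with $h$ in the image of $\Sym^{k} F_i \otimes \Sym^{m-k} F$, which is the multiplicity at $p_i$. The only (inessential) divergence is in verifying the vanishing-order formula: the paper restricts to the preimage of an invariant $\A^1$ meeting $D_{\rho_i}$ transversely and uses the isotypical decomposition under the one-parameter subgroup of $v_i$, whereas you split $\Sym^m \F$ into toric line bundles over $U_\sigma$ via Klyachko's compatibility condition and compute summand by summand -- these are equivalent local computations.
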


\begin{proof}
Let $h \in \Sym^m F$ be a defining equation for $H$.  Then $h$ corresponds to a torus invariant section $s$ of $\O(m)$ on $\P(\F)$, by Lemma~\ref{lem:sections}.  If $F_i$ is zero then $s$ does not vanish along $D_i$ and if $F_i$ is nonzero then $m_i$ is the largest integer such that $h$ is contained in the image of $\Sym^{m_i} F_i \otimes \Sym^{m- m_i} F$ in $\Sym^m F$. The one parameter subgroup corresponding to $v_i$ extends to an embedding of the affine line $\A^1$ in $X(\Sigma)$ meeting $D_{\rho_i}$ transversely at the image of zero.  After restricting the section $s$ to the preimage of $\A^1$, we must show that its order of vanishing along the preimage of zero is $m_i$.  The isotypical decomposition of the module of global sections of $\O(1)$ on the preimage of $\A^1$, for the action of the one-parameter subgroup corresponding to $v_i$, is exactly $\bigoplus_j F^{\rho_i}(j)$, and multiplication by the coordinate $x$ on $\A^1$ decreases degree by one.  The sections of $\O(m)$ are given by the $m$th symmetric power of this module, in which the image of $\Sym^{k} F_i \otimes \Sym^{m- k} F$ in $\Sym^m F$ appears in degree $k$, for nonnegative integers $k$.  It follows that the $T$-invariant section $s$ is equal to $x^{m_i}$ times a section that is nonvanishing along the preimage of zero, and hence vanishes to order $m_i$, as required.
\end{proof}

Now, we fix a maximal cone $\sigma$ and, after renumbering, we may assume $\sigma$ is spanned by $\rho_1, \ldots, \rho_d$.  Furthermore, for the remainder of the section we assume that
\[
F_i = 0, \mbox{ for } 1 \leq i \leq d.
\]
The class of $\O(1)$ and the classes of $D_{d+1}, \ldots, D_n$ form a basis for  $\Cl(\P(\F))$.

Let $f: \Bl_S \P_F \rightarrow \P_F$ be the blowup of $\P_F$ at the finite set of distinct points $\{ p_i\}$, corresponding to the nonzero $F_i$, for $i > d$.  Let $L$ be a hyperplane in $\P_F$, and let $E_i$ be the exceptional divisor over $p_i$.  Then $f^*L$ and $\{E_i\}$ together form a basis for $\Cl(\Bl_S \P_F)$.

We consider the linear map $\varphi^*: \Cl(\Bl_S \P_F)_\R \rightarrow \Cl(\P(\F))_\R$, taking $f^*L$ to $\O(1)$ and the class of $E_i$ to the class of $D_i$, for $i > d$.  If $H$ is a hypersurface of degree $m$ in $\P_F$ passing through $p_i$ with multiplicity $m_i$, then the class of the strict transform of $H$ in $\Bl_S \P_F$ is $f^*mL - \sum_i m_i E_i$.  So Lemma~\ref{lem:hypersurfaces} says that $\varphi^*$ maps the class of the strict transform of $H$ to the class of $\D_H$.

\begin{remark}
	One can show that the map $\varphi^*$ is the map 
	on class groups induced by the map $\varphi$ of the proof 
	of Theorem \ref{thm:polynomial}, see 
	\cite[Section 5]{GonzalezThesis}. 
	However, note that 
	Lemmas \ref{lem:sections} and \ref{lem:hypersurfaces} 
	give an independent 
	proof of the fact that we get a morphism of class groups, 
	without having to construct 
	the morphism $\varphi$. 
\end{remark}

\begin{prop} \label{prop:EffCones}
The pseudoeffective cone of $\P(\F)$ is generated by the image under $\varphi^*$ of the pseudoeffective cone of $\Bl_S \P_F$ together with the classes of those $D_i$ such that $F_i$ is zero.
\end{prop}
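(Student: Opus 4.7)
The plan is to establish both containments between the pseudoeffective cone $\overline{\Fff}(\P(\F))$ and the cone $C$ generated by $\varphi^*(\overline{\Fff}(\Bl_S \P_F))$ together with the classes $[D_j]$ for those $j$ with $F_j = 0$. The key point is that we already understand both cones via generators: as noted in the discussion preceding the proposition, the Borel fixed-point argument shows $\overline{\Fff}(\P(\F))$ is the closure of the cone spanned by $T$-invariant prime divisors, which are precisely the classes $[D_j]$ together with the classes $[\D_H]$ for irreducible hypersurfaces $H$ in $\P_F$. Similarly, $\overline{\Fff}(\Bl_S \P_F)$ is the closure of the cone spanned by the exceptional divisors $E_i$ and the strict transforms of irreducible hypersurfaces in $\P_F$.

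First I would verify $C \subseteq \overline{\Fff}(\P(\F))$. By definition $\varphi^*(E_i) = [D_i]$, which is effective. For any irreducible hypersurface $H$ of degree $m$ in $\P_F$ with multiplicities $m_i$ at $p_i$, the strict transform $\widetilde{H}$ has class $f^*mL - \sum_i m_i E_i$, and by Lemma~\ref{lem:hypersurfaces} we have $\varphi^*([\widetilde{H}]) = [\D_H]$, which is effective. Hence $\varphi^*$ sends every generator of $\overline{\Fff}(\Bl_S \P_F)$ to an effective class. Since $\varphi^*$ is a linear (hence continuous) map and $\overline{\Fff}(\P(\F))$ is closed, it follows that $\varphi^*(\overline{\Fff}(\Bl_S \P_F)) \subseteq \overline{\Fff}(\P(\F))$. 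Together with the obviously effective classes $[D_j]$ for $F_j = 0$, this gives the first containment.

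Next I would check the reverse containment $\overline{\Fff}(\P(\F)) \subseteq C$ by showing every generator lies in $C$. For $[D_j]$ with $F_j = 0$, this is one of the defining generators of $C$. For $[D_j]$ with $F_j \neq 0$ (so necessarily $j > d$ by our normalization), we have $[D_j] = \varphi^*([E_j])$, and since $E_j$ is effective on $\Bl_S \P_F$, this class lies in $\varphi^*(\overline{\Fff}(\Bl_S \P_F)) \subseteq C$. Finally, for a class $[\D_H]$ coming from an irreducible hypersurface $H$ of degree $m$ in $\P_F$ with multiplicities $m_i$ at the points $p_i$, Lemma~\ref{lem:hypersurfaces} gives $[\D_H] = \varphi^*\bigl(f^*mL - \sum_i m_i E_i\bigr) = \varphi^*([\widetilde{H}])$, and $\widetilde{H}$ is an effective divisor on $\Bl_S \P_F$, so $[\D_H] \in C$. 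Since $C$ contains every generator of the cone whose closure is $\overline{\Fff}(\P(\F))$, and both sides are closed cones, the containment follows.

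The only mild subtlety is bookkeeping: making sure we account for all $T$-invariant prime divisors (those pulled back from $X(\Sigma)$ versus those dominating $X(\Sigma)$, the latter being precisely the $\D_H$ by the argument sketched earlier), and being careful that effective classes on $\Bl_S \P_F$ correspond to honest linear combinations of strict transforms of hypersurfaces and exceptional divisors, so that Lemma~\ref{lem:hypersurfaces} applies. Otherwise the argument is a direct transfer of generating sets across the linear map $\varphi^*$, and no further hard input is needed beyond Lemmas~\ref{lem:sections} and \ref{lem:hypersurfaces}.
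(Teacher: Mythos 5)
Your argument is correct and follows essentially the same route as the paper's proof: both identify the effective cones on each side by their torus-invariant generators (the $\D_H$ and $D_i$ on $\P(\F)$, the strict transforms and $E_i$ on $\Bl_S \P_F$), match them under $\varphi^*$ via Lemma~\ref{lem:hypersurfaces}, and pass to closures. The paper simply states the equality of the cones of effective classes directly rather than spelling out the two containments, but the content is identical.
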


\begin{proof}
%By Lemma~\ref{lem:generators}, 
Every effective divisor on $\P(\F)$ is in the cone generated by the classes $\D_H$, for hypersurfaces $H$ in $\P_F$, and the classes $D_i$. On $\Bl_S \P_F$, every effective divisor is in the cone generated by the classes of the strict transforms of the hypersurfaces $H$ in $\P_F$, and the classes $E_i$. Now, the classes $D_{i}$ such that $F_i$ is nonzero are the images under $\varphi^*$ of the classes $E_{i}$, and Lemma~\ref{lem:hypersurfaces} says that the class of $\D_H$ is the image under $\varphi^*$ of the strict transform of the hypersurface $H$ in $\P_F$. Therefore, the cone of effective classes on $\P(\F)$ is equal to the cone generated by the image under $\varphi^*$ of the cone of effective classes on $\Bl_S \P_F$ together with the classes of those $D_i$ such that $F_i$ is zero.  The proposition follows by taking closures.
\end{proof}

\begin{proof}[Proof of Theorem~\ref{thm:EffCones}]
Let $\sigma$ be the cone spanned by $\rho_1, \ldots, \rho_d$, and choose the toric variety $X(\Sigma)$ so that each of the other rays $\rho_i$ is contained in $-\sigma$.  This can be accomplished, as in Example~\ref{ex:dim2}, by taking a suitable sequence of blowups of $(\P^1)^d$.  Choose the filtrations defining $\F$ so that $F_{d+1}, \ldots, F_n$ are distinct hyperplanes, and $F_i = 0$ for $i \leq d$. 

The choice of the filtrations ensures that $\varphi^*$ is an isomorphism on class groups, since it maps the basis elements $f^*L, E_{d+1}, \ldots, E_n$ for $\Cl(\Bl_S \P_F)$ to the basis elements $\O(1), D_{d+1}, \ldots, D_n$ for $\Cl(\P(\F))$, respectively.  Furthermore, the choice of the fan $\Sigma$ ensures that, for $i \leq d$, the divisor $D_{\rho_i}$ is linearly equivalent to an effective combination of the $D_{\rho_j}$, for $j > d$.  So the classes of $D_1, \ldots, D_d$ are in the cone spanned by the classes of $D_i$ for $i > d$, and hence are in the image under $\varphi^*$ of the pseudoeffective cone of $\Bl_S \P_F$.  Therefore, by Proposition~\ref{prop:EffCones}, the linear isomorphism $\varphi^*$ identifies the pseudoeffective cone of $\Bl_S \P_F$ with the pseudoeffective cone of $\P(\F)$.  If $F_{d+1}, \ldots, F_n$ are in very general position, then the inequalities on $r$ and $n$ imply that the pseudoeffective cone of $\Bl_S \P_F$ is not polyhedral \cite{Mukai04}, and the theorem follows.
\end{proof}

\begin{remark}
As in Corollary~\ref{cor:arbitrary}, a similar construction produces toric vector bundles $\F$ such that the effective cone of $\P(\F))$ is canonically isomorphic to the effective cone of $\Bl_S \P_F$, for an arbitrary arrangement $S$ of linear subspaces in $\P_F$.
\end{remark}

\section{Some generalizations}  \label{sec:generalizations}

The techniques developed here can also be applied more generally to describe Cox rings of toric vector bundles where the condition (\ref{eq:standardform}) is weakened to allow $F_i$ to appear for multiple steps in the Klyachko filtrations, where some of the $F_i$ are alowed to be $1$-dimensional, and where the subspaces are not necessarily distinct.  The results are similar to those in Section~\ref{sec:CoxRings}, only the presentations of the Cox rings are slightly more complex.

\subsection{Longer steps in the filtrations}
Consider a toric vector bundle $\F$ given by Klyachko filtrations of the form
\begin{equation*}
F^{\rho_j}(k) = \left \{ \begin{array}{ll} F & \mbox{ for } k \leq 0, \\
								  F_j  & \mbox{ for } 1\leq k \leq a_j, \\
								  0 & \mbox{ for } k > a_j,
								  \end{array} \right.  
\end{equation*}
for some positive integers $a_j$, and distinct linear subspaces $F_j \subsetneq F$ of dimension at least 2, for $j =1, \ldots, s$.  The bundles that satisfy the condition (\ref{eq:standardform}) are exactly those where each $a_j$ is equal to 1.  The Cox ring of $\P(\F)$ can be analyzed just as in Section~\ref{sec:CoxRings}, except that $T$ does not act freely on $U$; if $D_j$ denotes the preimage of $O_{\rho_j}$ in $U$, then $D_j$ has a stabilizer of order $a_j$.  In this case, the Cox ring of $\P(\F)$ is a finite extension of a polynomial ring over $\cR(\Bl_S \P_F)$ with a presentation of the form
\[
\cR(\P(\F)) \cong \cR(\Bl_S \P_F)[x_1, \ldots, x_n] / \langle 1_{E_j} - x_j^{a_j} \ | \ 1 \leq j \leq s \rangle,
\]
by \cite[Theorem~1.1]{HausenSuess10}.  Here, $1_{E_i}$ denotes the canonical section of the bundle $\O(E_i)$ associated to the exceptional divisor $E_i$ over $\P_{F/F_i}$.  It follows that $\cR(\P(\F))$ is finitely generated if and only if $\cR(\Bl_S \P_F)$ is finitely generated.

\subsection{$1$-dimensional subspaces}  \label{sec:1-dim}

We now discuss toric vector bundles given by Klyachko filtrations of the form (\ref{eq:standardform}), but where some $F_j$ are allowed to be 1-dimensional.  Consider the special case where the fan $\Sigma$ consists of a single ray $\rho$, and $\F$ is given by the filtration
\[
F^{\rho}(k) = \left \{ \begin{array}{ll} F & \mbox{ for } k \leq 0, \\
								  L  & \mbox{ for } k = 1, \\
								  0 & \mbox{ for } k > 1,
								  \end{array} \right.
\]
where $L$ is 1-dimensional.  The analysis of such a bundle is similar to that in Proposition~\ref{prop:oneray}, except that $\overline{T \cdot W}$ is a divisor.  Still, the torus $T$ acts freely on the toric variety $\P(\F) \smallsetminus \overline{T \cdot Z}$, and a toric computation shows that the geometric quotient exists as a nonseparated toric prevariety; it is $\P_F$ with the hyperplane $\P_{F/L}$ doubled.

Now, consider the general case, and let $S$ be the set of linear subspaces of $\P_F$ corresponding to the $F_j$ that have dimension at least 2.  Suppose the rays are numbered so that $F_1, \ldots, F_\ell$ are $1$-dimensional and the rest are not.  Then the analysis in the proof of Theorem~\ref{thm:polynomial} produces open subsets $U$ and $U'$ satisfying (1)-(4), except that the target of $\varphi$ is $\Bl_S \P_F$ doubled along the strict transforms of the hyperplanes $H_i = \P_{F/F_i}$ for $ 1 \leq i \leq \ell$.  Then \cite{HausenSuess10} gives a presentation of the Cox ring $\cR(\P(\F))$ as a polynomial ring in $n-s$ variables over
\begin{equation}
  \label{eq:dim-1-filt}
  \cR(\Bl_S \P_F)[x_1, \ldots, x_\ell, y_1, \ldots, y_\ell]/ \langle 1_{H_i} - x_i y_i \ | \ 1 \leq i \leq \ell \rangle,
\end{equation}
where $1_{H_i}$ is the canonical section of $\O(H_i)$.  Setting the $n-s$ free variables equal to zero and $y_1, \ldots, y_\ell$ equal to 1, one can obtain $\cR(\Bl_S \P_F)$ as a quotient of $\cR(\P(\F))$, and hence $\cR(\P(\F))$ is finitely generated if and only if $\cR(\Bl_S \P_F)$ is so.

\begin{example} Using the above observations, in \cite{HausenSuess10} the Cox ring of the projectivized tangent bundle of a toric variety was calculated as follows. Let $X$ be a toric variety associated to fan $\Sigma$ with rays $\rho_1, \ldots, \rho_n$ having $v_1, \ldots,v_n \in N$ as their primitive generators. By \cite{Klyachko90} the tangent bundle $T_X$ corresponds to the filtrations of the form (\ref{eq:standardform}) with $F^{\rho_j}=\k\cdot v_j \subset N \otimes \k$.
In particular, all the subspace are one-dimensional. Hence, the set $S$ is empty and $\cR(\Bl_S \P_F)=\cR(\P_F)$ is simply the polynomial ring $\Sym(F)$. The element $1_{H_j}$ can be identified with $v_j \in \Sym(F)$. 
%up to a constant $1_{H_j}$ equals $v_j \in \Sym(F)$. Since $X(\Sigma)$ is complete we may choose a basis from from $v_1, \ldots, v_n$. 
If there are no opposite rays  in $\Sigma$, by the formula (\ref{eq:dim-1-filt}) we obtain \[\k[x_1,\ldots, x_n,y_1,\ldots,y_n]/\big\langle \textstyle \sum_i \lambda_i \cdot x_iy_i
\mid  \underline{\lambda} \in \k^n \text{, s.t. } \sum_i \lambda_i v_i = 0\big\rangle\] as the Cox ring of $\P(T_X)$. 
\end{example}

\subsection{Repetitions of subspaces and combinations}  \label{sec:repetitions}
If some subspace is repeated, so $F_i = F_j$ for some $i \neq j$, then the arguments in Section~\ref{sec:CoxRings} again go through, but the geometric quotient is nonseparated, with one copy of the exceptional divisor over $\P_{F/F_i}$ for each time that $F_i$ appears.  Again, this construction leads to a presentation of $\cR(\P(\F))$ as a finitely generated algebra over $\cR(\Bl_S \P_F)$ that is finitely generated if and only if $\cR(\Bl_S \P_F)$ is so.

These generalizations can be combined to give a presentation of the Cox ring of an arbitrary toric vector bundle for which the Klyachko filtrations contain at most one nontrivial subspace for each ray.

\begin{prop}
Let $\F$ be a toric vector bundle corresponding to Klyachko filtrations $\{F^\rho(j) \}$ such that at most one proper subspace of $F$ appears in each filtration, and let $S$ be the collection of linear subspaces of $\P_F$ corresponding to these proper subspaces.  Then $\cR(\P(\F))$ is finitely generated if and only if $\cR(\Bl_S \P_F)$ is so.
\end{prop}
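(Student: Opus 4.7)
The plan is to combine the three generalizations of Subsections~6.1--6.3 into a single application of \cite[Theorem~1.1]{HausenSuess10}. After tensoring $\F$ with a suitable line bundle, which does not change $\P(\F)$, I can shift each Klyachko filtration so that it has the form
\begin{equation*}
F^{\rho}(k) = \left\{ \begin{array}{ll} F & \mbox{for } k \leq 0, \\ F_\rho & \mbox{for } 1 \leq k \leq a_\rho, \\ 0 & \mbox{for } k > a_\rho, \end{array} \right.
\end{equation*}
for some positive integer $a_\rho$ and some proper subspace $F_\rho \subsetneq F$, where $F_\rho$ is allowed to be zero, one-dimensional, or equal to $F_{\rho'}$ for several rays. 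This is the universal form that simultaneously accommodates all three phenomena treated separately in the earlier subsections.

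Following the proof of Theorem~\ref{thm:polynomial}, I would then form open subsets $U \subset U' \subset \P(\F)$ by removing the $T$-orbit closures of the subvarieties $Z_\rho = \P_{F_\rho}$ and $W_\rho = \P_{F/F_\rho}$ (when $\dim F_\rho \geq 2$), together with preimages of the codimension-$2$ torus-invariant strata of $X(\Sigma)$, and finally removing from $U'$ those divisors with positive-dimensional generic stabilizer to obtain $U$. The target of the induced rational map $\varphi$ will now be a possibly nonseparated prevariety $Y$ obtained from $\Bl_S \P_F$ by doubling each exceptional hyperplane $\P_{F/F_\rho}$ coming from a one-dimensional $F_\rho$ (cf.\ Subsection~6.2) and introducing, for each subspace $L \in S$, one copy of the exceptional divisor over $L$ per ray $\rho$ with $F_\rho = L$ (cf.\ Subsection~6.3). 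A ray-by-ray toric computation along the lines of Proposition~\ref{prop:oneray}, combined with the locality of geometric quotients on the base, should show that $\varphi|_U$ is a geometric quotient onto an open subset whose complement in $Y$ has codimension at least $2$.

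Applying \cite[Theorem~1.1]{HausenSuess10} to this setup would give a presentation
\[
\cR(\P(\F)) \cong \cR(Y)[\,t_\rho : F_\rho = 0\,],
\]
and $\cR(Y)$ would in turn be presented as $\cR(\Bl_S \P_F)$ adjoined with finitely many variables $x_\rho$ (one for each ray with $\dim F_\rho \geq 2$) and pairs $x_\rho, y_\rho$ (one pair for each ray with $\dim F_\rho = 1$), modulo one binomial relation of the form $1_{E_L} = \prod_\rho x_\rho^{a_\rho} \cdot \prod_{\rho'} (x_{\rho'} y_{\rho'})^{a_{\rho'}}$ for each subspace $L \in S$, where the products range over rays (of either type) whose subspace is $L$. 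This presentation makes the forward implication immediate: if $\cR(\Bl_S \P_F)$ is finitely generated then so is $\cR(\P(\F))$. Conversely, setting the free variables $t_\rho$ to zero and appropriate $x_\rho,y_\rho$ to $1$ realizes $\cR(\Bl_S \P_F)$ as a quotient of $\cR(\P(\F))$, giving the other direction.

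The main obstacle will be bookkeeping: verifying that the three separate local pictures (longer steps, one-dimensional subspaces, and repetitions) interact cleanly when they occur simultaneously on different rays with the same $F_\rho$, so that the quotient description and the exact form of the binomial relations in $\cR(Y)$ come out as claimed. Once the geometric quotient $\varphi|_U \colon U \to \varphi(U)$ and the codimension-$2$ condition on its image in $Y$ are established in this combined setting, the algebraic conclusion follows formally from \cite{HausenSuess10}.
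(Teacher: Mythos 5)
Your proposal follows essentially the same route as the paper, which likewise obtains the Proposition by combining the three generalizations (longer steps, one-dimensional subspaces, repeated subspaces) into a single application of the Hausen--S\"u{\ss} quotient presentation, with a possibly nonseparated prevariety as geometric quotient. The only minor imprecision is in the converse direction: when some $a_\rho \geq 2$ the relation $1_{E_L} - x_\rho^{a_\rho}$ does not let you recover $\cR(\Bl_S \P_F)$ by merely specializing variables to $0$ or $1$, but the extension is module-finite over $\cR(\Bl_S \P_F)$, so an Artin--Tate argument closes this direction.
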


\begin{remark}
It may be possible to carry through a similar analysis for more general toric vector bundles.  However, even when the fan consists of a single ray, if multiple proper subspaces occur in a single filtration then the torus quotients that appear are weighted blowups of projective space instead of ordinary blowups.  Since very little is known about Cox rings of weighted blowups of projective space, we have not considered such bundles in this work.
\end{remark}

\subsection{A bundle on the Losev-Manin moduli space}
We conclude with an example of a bundle on the Losev-Manin moduli space of pointed stable curves.

Let $v_0, \ldots, v_d$ be vectors that generate the rank $d$ lattice $N$ and sum to zero.  Then the fan $\Sigma$ whose nonzero cones are spanned by proper subsets of $\{v_0, \ldots, v_d \}$ corresponds to projective space $\P^d$, and the barycentric subdivision $\Sigma'$ is the normal fan of a permutahedron.  The corresponding toric variety is the Losev-Manin moduli space $\overline{L}_d$ of pointed stable curves studied in \cite{LosevManin00}.

Let $\F$ be the pullback of the cotangent bundle $\Omega_{\P^d}$ to the Losev-Manin moduli space $\overline{L}_d \cong X(\Sigma')$.  The rays of $\Sigma'$ are naturally indexed by the proper subsets of $\{0, \ldots, d\}$, where the primitive generator of the ray $\rho_I$ is
\[
v_I = \sum_{i \in I} v_i.
\]
The fiber of $\F$ over $1_T$ is canonically identified with $M \otimes_\Z \k$, and we write $M_I$ for the linear subspace perpendicular to the linear span of the $v_i$ for $i \in I$.  The Klyachko filtrations corresponding to $\F$ are then
\[
F^{\rho_I}(k) = \left \{ \begin{array}{ll} M \otimes \k & \mbox{ for } k \leq -1 \\
								  M_I  & \mbox{ for } k = 0, \\
								  0 & \mbox{ for } k > 0,
				  \end{array} \right.
\]
These filtrations almost satisfy (\ref{eq:standardform}), except that the subspaces $M_I$ corresponding to sets $I$ of size $d-1$ are 1-dimensional, and the last nonzero subspace appears in the wrong place in the filtration.  Tensoring with an appropriate line bundle puts the last nonzero subspace in the correct place in the filtration and does not change the projectivization.  Then, applying the computation for filtrations with 1-dimensional subspaces in Section~\ref{sec:1-dim} above, we find that $\cR(\P(\F))$ is a polynomial ring in $d+1$ variables over 
\[
\textstyle  
\cR(\Bl_S \P_F)[x_1, \ldots, x_{d+1 \choose 2}, y_1, \ldots, y_{d+1 \choose 2}] / \langle 1_{H_i} - x_i y_i \ | \ 1 \leq i \leq {d+1 \choose 2} \rangle,
\]
where $H_i$ runs over runs over all the hyperplanes $\P_{F/F_I}$ with index sets $I$ of size $d-1$.

Now, $S$ consists of all linear subspaces spanned by $d + 1$ points in general position in $\P_F \cong \P^{d-1}$.  As in Example~\ref{ex:Kapranov}, the blowup $\Bl_S \P_F$ is isomorphic to the Deligne-Mumford moduli space $\overline{M}_{0,d+2}$.

\begin{cor}
The projectivization of the pullback of the cotangent bundle on $\P^d$ to the Losev-Manin moduli space $\overline {L}_d$ is a Mori dream space if and only if $\overline{M}_{0,d+2}$ is a Mori dream space.
\end{cor}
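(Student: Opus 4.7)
The plan is to read off the corollary directly from the presentation of $\cR(\P(\F))$ displayed immediately before its statement, combined with the Kapranov identification $\Bl_S \P_F \cong \overline{M}_{0,d+2}$ recalled in Example~\ref{ex:Kapranov}. Writing
\[
A = \cR(\Bl_S \P_F)[x_1, \ldots, x_N, y_1, \ldots, y_N] \big/ \langle 1_{H_i} - x_i y_i \mid 1 \leq i \leq N \rangle,
\]
with $N = \binom{d+1}{2}$, the preceding calculation gives $\cR(\P(\F))$ as a polynomial ring in $d+1$ variables over $A$. It therefore suffices to establish that $\cR(\P(\F))$ is finitely generated if and only if $\cR(\Bl_S \P_F) \cong \cR(\overline{M}_{0,d+2})$ is so, and then to invoke the Hu--Keel characterization of Mori dream spaces via finite generation of the Cox ring.

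For the forward direction, finite generation of $\cR(\Bl_S \P_F)$ immediately implies finite generation of $A$, since $A$ is generated over $\cR(\Bl_S \P_F)$ by the $2N$ variables $x_i$ and $y_i$; adjoining $d+1$ further free variables preserves finite generation. For the converse, I would follow the observation made at the end of Section~\ref{sec:1-dim} and exhibit $\cR(\Bl_S \P_F)$ as a quotient of $\cR(\P(\F))$, by setting the $d+1$ polynomial generators to zero and each $y_i$ to $1$; the relations $1_{H_i} = x_i y_i$ then allow one to eliminate each $x_i$ in favor of $1_{H_i} \in \cR(\Bl_S \P_F)$, and the resulting surjection $\cR(\P(\F)) \twoheadrightarrow \cR(\Bl_S \P_F)$ transfers finite generation.

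Having established the equivalence of finite generation, the Mori dream space property transfers as well, because both $\P(\F)$ and $\overline{M}_{0,d+2}$ are smooth projective varieties with finitely generated Picard groups coinciding up to torsion with their Néron--Severi groups, so the Hu--Keel theorem identifies the Mori dream space property with finite generation of the Cox ring in each case. The real content of the argument is already built into the presentation of $\cR(\P(\F))$ assembled from the generalizations in Section~\ref{sec:1-dim} and the calculation of the Klyachko data of $\F$ preceding the corollary; beyond a routine manipulation of that presentation there is no genuine obstacle, and the corollary emerges as the clean consequence.
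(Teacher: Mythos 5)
Your proposal is correct and follows essentially the same route the paper intends: the corollary is read off from the displayed presentation of $\cR(\P(\F))$ over $\cR(\Bl_S \P_F)[x_i,y_i]/\langle 1_{H_i}-x_iy_i\rangle$ together with the Kapranov identification $\Bl_S \P_F \cong \overline{M}_{0,d+2}$, and the two directions of the finite-generation equivalence (adjoining variables one way, specializing the free variables to $0$ and the $y_i$ to $1$ the other way) are exactly the argument given at the end of Section~\ref{sec:1-dim}. No substantive difference from the paper's approach.
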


\bibliography{math}

\providecommand{\bysame}{\leavevmode\hbox to3em{\hrulefill}\thinspace}
\providecommand{\MR}{\relax\ifhmode\unskip\space\fi MR }
% \MRhref is called by the amsart/book/proc definition of \MR.
\providecommand{\MRhref}[2]{%
  \href{http://www.ams.org/mathscinet-getitem?mr=#1}{#2}
}
\providecommand{\href}[2]{#2}
\begin{thebibliography}{EKW04}

\bibitem[ANH99]{ACampoNeuenHausen99}
A.~A'Campo-Neuen and J.~Hausen, \emph{Quotients of toric varieties by the
  action of a subtorus}, Tohoku Math. J. (2) \textbf{51} (1999), no.~1, 1--12.

\bibitem[Bri07]{Brion07}
M.~Brion, \emph{The total coordinate ring of a wonderful variety}, J. Algebra
  \textbf{313} (2007), no.~1, 61--99.

\bibitem[CT06]{CastravetTevelev06}
A.-M. Castravet and J.~Tevelev, \emph{Hilbert's 14th problem and {C}ox rings},
  Compos. Math. \textbf{142} (2006), no.~6, 1479--1498.

\bibitem[EKW04]{EKW04}
J.~Elizondo, K.~Kurano, and K.~Watanabe, \emph{The total coordinate ring of a
  normal projective variety}, J. Algebra \textbf{276} (2004), no.~2, 625--637.

\bibitem[Ful93]{Fulton93}
W.~Fulton, \emph{Introduction to toric varieties}, Annals of Mathematics
  Studies, vol. 131, Princeton University Press, Princeton, NJ, 1993.

\bibitem[Gon10]{Gonzalez10}
J.~Gonz\'alez, \emph{Projectivized rank two toric vector bundles are {M}ori
  dream spaces}, preprint, arXiv:1001.0838v1, 2010.

\bibitem[Gon11]{GonzalezThesis}
\bysame, \emph{Toric projective bundles}, Ph.D. thesis, University of Michigan,
  2011.

\bibitem[Hau08]{Hausen08}
J.~Hausen, \emph{Cox rings and combinatorics. {II}}, Mosc. Math. J. \textbf{8}
  (2008), no.~4, 711--757, 847.

\bibitem[HK00]{HuKeel00}
Y.~Hu and S.~Keel, \emph{Mori dream spaces and {GIT}}, Michigan Math. J.
  \textbf{48} (2000), 331--348, Dedicated to William Fulton on the occasion of
  his 60th birthday.

\bibitem[HMP10]{tvbpositivity}
M.~Hering, M.~Musta\c{t}\u{a}, and S.~Payne, \emph{Positivity for toric vector
  bundles}, Ann. Inst. Fourier (Grenoble) \textbf{60} (2010), no.~2, 607--640.

\bibitem[HS10]{HausenSuess10}
J.~Hausen and H.~S{\"u}{\ss}, \emph{The {C}ox ring of an algebraic variety with
  torus action}, Adv. Math. \textbf{225} (2010), no.~2, 977--1012.

\bibitem[HT04]{HassettTschinkel04}
B.~Hassett and Y.~Tschinkel, \emph{Universal torsors and {C}ox rings},
  Arithmetic of higher-dimensional algebraic varieties, Progr. Math., vol. 226,
  Birkh\"auser Boston, Boston, MA, 2004, pp.~149--173.

\bibitem[Kap93]{Kapranov93}
M.~Kapranov, \emph{Chow quotients of {G}rassmannians. {I}}, I. {M}.
  {G}el$'$fand {S}eminar, Adv. Soviet Math., vol.~16, Amer. Math. Soc.,
  Providence, RI, 1993, pp.~29--110.

\bibitem[Kly90]{Klyachko90}
A.~Klyachko, \emph{Equivariant vector bundles on toral varieties}, Math.
  USSR-Izv. \textbf{35} (1990), no.~2, 337--375.

\bibitem[Kno93]{Knop93}
F.~Knop, \emph{\"{U}ber {H}ilberts vierzehntes {P}roblem f\"ur {V}ariet\"aten
  mit {K}ompliziertheit eins}, Math. Z. \textbf{213} (1993), no.~1, 33--36.

\bibitem[Laz04]{PAG1}
R.~Lazarsfeld, \emph{Positivity in algebraic geometry. {I}}, Ergebnisse der
  Mathematik und ihrer Grenzgebiete. 3. Folge. A Series of Modern Surveys in
  Mathematics, vol.~48, Springer-Verlag, Berlin, 2004.

\bibitem[LM00]{LosevManin00}
A.~Losev and Y.~Manin, \emph{New moduli spaces of pointed curves and pencils of
  flat connections}, Michigan Math. J. \textbf{48} (2000), 443--472, Dedicated
  to William Fulton on the occasion of his 60th birthday.

\bibitem[Muk04]{Mukai04}
S.~Mukai, \emph{Geometric realization of {$T$}-shaped root systems and
  counterexamples to {H}ilbert's fourteenth problem}, Algebraic transformation
  groups and algebraic varieties, Encyclopaedia Math. Sci., vol. 132, Springer,
  Berlin, 2004, pp.~123--129.

\bibitem[Oda88]{Oda88}
T.~Oda, \emph{Convex bodies and algebraic geometry}, Ergebnisse der Mathematik
  und ihrer Grenzgebiete (3), vol.~15, Springer-Verlag, Berlin, 1988, An
  introduction to the theory of toric varieties, Translated from the Japanese.

\bibitem[Ott10]{Ottem10}
J.~Ottem, \emph{On the {C}ox ring of $\mathbf{P}^2$ blown up in points on a
  line}, To appear in Math. Scand. arxiv:0901.4277v5, 2010.

\bibitem[Pay08]{moduli}
S.~Payne, \emph{Moduli of toric vector bundles}, Compos. Math. \textbf{144}
  (2008), no.~5, 1199--1213.

\bibitem[Tot08]{Totaro08}
B.~Totaro, \emph{Hilbert's 14th problem over finite fields and a conjecture on
  the cone of curves}, Compos. Math. \textbf{144} (2008), no.~5, 1176--1198.

\end{thebibliography}
\bibliographystyle{amsalpha}

\end{document}